\documentclass[11pt,english]{amsart}
\usepackage[T1]{fontenc}
\usepackage[latin9]{inputenc}
\usepackage{amstext}
\usepackage{amsthm}
\usepackage{esint}

\makeatletter
\usepackage{amsmath,amsfonts,amssymb,amsthm,epsfig}
\usepackage{graphicx}
\usepackage{soul}

\usepackage{color}
\usepackage[final,allcolors=blue,colorlinks=true]{hyperref}

\def\R{\mathbb R}

\def\S{\mathbb S}

\newcommand{\HH}{{\mathcal H}}
\newcommand{\sing}{\text{\rm sing}}

\def\be{\beta}

\def\de{\delta}
\def\ep{\epsilon}
\def\la{\lambda}

\def\var{\varphi}
\def\om{\omega}
\def\na{\nabla}
\def\Om{\Omega}  
\def\De{\Delta}      
\def\Si{\Sigma}      
\def\La{\Lambda}      

\def\cal{\mathcal}
\def\wq{\infty}
\def\pa{\partial}

\def\loc{\text{\rm loc}}

\newcommand{\D}{{\rm d}}

\newcommand{\divv}{\text{\rm div}}
\newcommand{\medint}{-\kern -,375cm\int}         
\newcommand{\medintinrigo}{-\kern -,315cm\int}
\newcommand{\wto}{\rightharpoonup}                

\newcommand{\LLcorner}{\scalebox{1.5}{\ensuremath{\llcorner}}}   

\numberwithin{equation}{section}
\newtheorem{theorem}{Theorem}[section]
\newtheorem*{theorem*}{Theorem}  

\newtheorem*{conclusion*}{Conclusin}

\newtheorem*{corollary*}{Corollary}

\newtheorem{definition}[theorem]{Definition}

\newtheorem{lemma}[theorem]{Lemma}
\newtheorem*{lemma*}{Lemma}

\newtheorem*{notation*}{Notation}

\newtheorem{proposition}[theorem]{Proposition}
\newtheorem*{proposition*}{Proposition}

\newtheorem*{remark*}{Remark}

\newtheorem*{example*}{Example}                
\theoremstyle{definition}

\makeatother

\usepackage{babel}
\begin{document}
	\title[]{Energy identity for stationary biharmonic mappings into spheres in supercritical dimensions}
	
	\author[C.-Y. Guo, C. Wang and C.-L. Xiang]{Chang-Yu Guo, Changyou Wang and Chang-Lin Xiang$^*$}
	
	\address[Chang-Yu Guo]{Research Center for Mathematics and Interdisciplinary Sciences, Shandong University 266237,  Qingdao, P. R. China and and  Frontiers Science Center for Nonlinear Expectations, Ministry of Education, P. R. China and Department of Physics and Mathematics, University of Eastern Finland, 80101, Joensuu, Finland} \email{changyu.guo@sdu.edu.cn}
	\address[Changyou Wang]{Department of Mathematics, Purdue University, West Lafayette, IN 47907, USA} \email{wang2482@purdue.edu}
	\address[Chang-Lin Xiang]{Three Gorges Mathematical Research Center, China Three Gorges University,  443002, Yichang,  P. R. China}
	\email{changlin.xiang@ctgu.edu.cn}
	\thanks{*: Corresponding author}
	\thanks{ Guo is supported by the Young Scientist Program of the Ministry of Science and Technology of China (No.~2021YFA1002200), the Taishan Scholar Project and the NSF of Shandong Province (No.~ZR2022YQ01). Wang is partially supported by NSF DMS 2453789 and Simons Travel Grant TSM-00007723. The corresponding author	Xiang is financially supported by NSFC (No. 12271296),   the NSF of Hubei province (No. 2024AFA061). Guo and Xiang are also supported by the Jiangsu Provincial Scientific Research Center of Applied Mathematics under Grant No. 7707014040A.}
	
	\begin{abstract}
		Energy identity for harmonic type maps in supercritical dimensions is an important and difficult problem. For sphere-valued harmonic maps, the first breakthrough was achieved by Lin-Rivi\`ere  [Duke Math. J. 2002].  In this paper, by adapting their strategy, we establish the energy identity for stationary biharmonic maps into spheres in supercritical dimensions $n\ge 5$.
	\end{abstract}
	
	\maketitle
	
	{\small
		\keywords{\noindent {\bf Keywords:} Energy identity, Stationary biharmonic mappings, Defect measure, Supercritical dimension, Lorentz spaces}
		\smallskip
		\newline
		\subjclass{\noindent {\bf 2020 Mathematics Subject Classification:}  35J48, 35G50, 35B65 }
	}
	\bigskip
	
	\section{Introduction and main results}
	
Energy identity for harmonic type maps in supercritical dimensions is an important and difficult problem. The first breakthrough was achieved in 2002 by Lin-Rivi\`ere \cite{Lin-Riviere-2002} for sphere-valued stationary harmonic maps. Since then, it has been expected  that their result should hold for other harmonic type maps. In this work, we aim to extend their result to  biharmonic maps, which can be viewed as  a natural higher order extension of harmonic maps, from an Euclidean  domain $\Om\subset\R^n$ ($n\ge 4$) into a   compact smooth Riemannian manifold $(N,h)$
 without boundary.  
 
 We first recall the necessary definitions. By embedding  $N$  isometrically into an Euclidean space $\mathbb{R}^m$
	for sufficiently large $m$, weakly  biharmonic  maps are defined as critical points with respect to
	 outer variations of the  second order energy functional
	$$
	E(u,\Om)=\int_{\Om}|\Delta u|^2dx \qquad \text{for }u\in H^{2}(\Om, N),
	$$
	where  $\Omega\subset\R^n$ is a bounded smooth domain, and
	$$H^{2}(\Om, N)=\Big\{u\in H^{2}(\Om, \R^m): \ u(x)\in N\text{ for a.e. }x\in \Om \Big\}.$$
	Note that the notion of weakly biharmonic maps may depend on the isometric embedding of $(N,h)$ into $\mathbb{R}^m$. 	

	The Euler-Lagrange equation for a weakly biharmonic map $u\in H^{2}(\Om, N)$ is given by
	$$
	\Delta^2u-\Delta\left(A(u)(\nabla u,\nabla u)\right)-2\nabla\cdot \langle \Delta u, \nabla P(u) \rangle+\langle \Delta (P(u)), \Delta u\rangle=0,
	$$
	where $A(\cdot)(\cdot,\cdot)$ is the second fundamental form of $N\hookrightarrow \mathbb{R}^m$,
	and $P(y):\R^m\to T_yN$ is the orthogonal projection of $\mathbb{R}^m$ into the tangent space $T_yN$ of $N$ at  $y\in N$. When $N=\mathbb{S}^{m-1}\subset \mathbb{R}^m$, the above equation reduces to
	\begin{equation}\label{eq:weakly bh into sphere}
		\Delta^2u=-\left(|\Delta u|^2+\Delta (|\nabla u|^2)+2\nabla u\cdot \nabla \Delta u\right)u.
	\end{equation}

From an analytic point of view,  dimension four is critical for the  equation \eqref{eq:weakly bh into sphere} in the sense that every weakly biharmonic map is smooth in dimension four, while singularity  appears in general in dimensions $n\ge 5$. Thus we say that the dimensions $n\ge 5$ are supercritical for biharmonic maps. Our aim in this paper is to study energy identity for stationary biharmonic maps in supercritical dimensions.
	\begin{definition}[Stationary biharmonic maps]\label{def:stationary bi-HM}
		A map $u\in H^{2}(\Om,N)$ is called a stationary biharmonic map if it is weakly biharmonic and if, in addition, it is a critical point of $E$ with respect to the inner variations, i.e. for all $\xi\in C_0^\infty(\Om,\R^n)$, it holds
		\begin{equation}\label{eq:def for stationary biHM}
			\frac{d}{dt}\Big|_{t=0}E(u\circ (id+t\xi),\Om)=0.
		\end{equation}
	\end{definition}
	
By direct calculations, \eqref{eq:def for stationary biHM} implies that	
	\begin{equation}\label{eq:id for stationary biHM}
		\int_\Om|\De u|^{2}{\rm div}\xi=\int_\Om\De u\cdot\Big(4\sum_{k=1}^n\na\pa_{k}u\cdot\na\xi^{k}+2\sum_{k=1}^n\De\xi^k\cdot\pa_ku\Big).
		\end{equation}
	A crucial property of stationary biharmonic maps is the following monotonicity formula for normalized energy (see \cite{Chang-W-Y-1999}): For any $x\in\Om$
	and $0<r\le R<{\rm{dist}}(x,\partial\Om)$,
	\[
	\Phi_{u}(a,r)-\Phi_{u}(a,\rho)=4\int_{B_{r}(a)\backslash B_{\rho}(a)}\left(\frac{|D\pa_{X}u|^{2}}{|x-a|^{n-2}}+(n-2)\frac{|\pa_{X}u|^{2}}{|x-a|^{n}}\right)dx,
	\]
	where $X=x-a$ and
	\[
	\Phi_{u}(a,r)=r^{4-n}\int_{B_{r}(a)}|\De u|^{2}+r^{3-n}\int_{\pa B_{r}(a)}\left(\pa_{X}|Du|^{2}+4|Du|^{2}-4\frac{|\pa_{X}u|^{2}}{|x-a|^{2}}\right).
	\]
	
	 The regularity theory is one of the central topics for biharmonic maps.	Chang-Wang-Yang \cite{Chang-W-Y-1999} initiated the study of regularity theory of sphere-valued (i.e., $N=\S^k$) weakly biharmonic maps and established their smoothness
	when $n=4$. They further proved that every stationary biharmonic map is smooth away from the singular set  $\sing(u)$ of $u$ which is defined as
	\[
	\sing(u):=\Big\{x\in \Omega: u \text{ is not continuous at any neighborhood of }x\Big\}.
	\] and proved that $\cal{H}^{n-4}(\sing(u))=0$ if $n\geq 5$; see also the independent work of P. Strzelecki \cite{Strzelecki-2003} using a different method.
	Shortly after, the second author of the present paper developed a regularity theory of  biharmonic maps into
	general compact Riemannian manifolds in a series of papers \cite{Wang-2004-CV, Wang-2004-MZ, Wang-2004-CPAM} via the method of Coulomb moving frames. In the critical dimension $n=4$, motivated by the celebrated result of Rivi\`ere \cite{Riviere-2007}, Lamm and Rivi\`ere \cite{Lamm-Riviere-2008} introduced a class of more general fourth order critical elliptic systems, including biharmonic maps, and proved the continuity of weak solutions. Built upon the techniques of \cite{Lamm-Riviere-2008}, Guo and Xiang  \cite{Guo-Xiang-2019-Boundary} derived the H\"older continuity of weak solutions, while Guo, Xiang and Zheng \cite{Guo-Xiang-Zheng-2021-CVPDE} established the sharp $L^p$-regularity theory; for the fourth order Lamm-Rivi\`ere system; see also \cite{Struwe-2008} for an alternative approach on the regularity theory of weakly biharmonic maps and related fourth order elliptic system, and Guo-Wang-Xiang \cite{Guo-Wang-Xiang-2023-CVPDE} further established an optimal partial $L^p$ regularity theory for a nonlinear fourth order elliptic system introduced by Struwe \cite{Struwe-2008} in dimensions $n\ge 5$.
	
	To further study the regularity theory of biharmonic maps in supercritical dimensions $n\geq 5$,  Scheven \cite{Scheven-2008-ACV} developed a defect measure theory for stationary biharmonic maps, extending the seminal result of Lin \cite{Lin-1999-Annals} for harmonic maps. More precisely, among other results, Scheven proved that, if $u_{k}\in H^{2}(\Om, N)$ is a sequence of stationary biharmonic mappings and
	\[
	u_{k}\wto u\qquad\text{weakly but not strongly in }H^{2}(\Om).
	\]
	Then $u$ is a weakly biharmonic map and
	\[
	|\De u_{k}|^{2}dx\wto|\De u|^{2}dx+\nu\qquad\text{ as weak convergence of Radon measures}
	\]
	with
	\begin{equation}\label{eq: density function}
		\nu=\Theta_{\nu}^{n-4}(x){\cal H}^{n-4}\LLcorner\Sigma
	\end{equation}
	for some countably $(n-4)$-rectifiable set $\Si\subset \Om$ and some density function $\Theta_{\nu}^{n-4}(x)$. Based on the results of Scheven \cite{Scheven-2008-ACV}, very recently, Guo-Jiang-Xiang-Zheng \cite{Guo-J-Xiang-Zheng-2025} obtained the optimal higher regularity for biharmonic maps via the powerful method of quantitative stratification of Naber-Valtorta \cite{Naber-Val-2017-AnnMath}.
	
To further understand the regularity theory of stationary biharmonic maps, a natural question then is to understand the density function $\Theta_{\nu}^{n-4}(x)$	as defined in \eqref{eq: density function}, which is also called the energy quantization problem in the literature.
	
	Such problems were first studied in the literature by Sacks and Uhlenbeck in the seminar work \cite{Sacks-Uhlenbeck-1981} on harmonic maps in the critical  dimension two.  They discovered that the loss of compactness of weakly convegent sequence of harmonic maps arises from the creation of bubbles, which are nontrivial harmonic maps from $\mathbb{S}^2$ to $N$, near each energy concentration point. Subsequently, Parker \cite{Parker1996}, Ding-Tian \cite{DingTian1995}, Qing-Tian \cite{QingTian1997} and Lin-Wang \cite{LinWang1998} established the energy identity that accounts for the loss of energy by the sum of energies of finitely many bubbles. It was until 2002 that Lin and Rivi\`ere \cite{Lin-Riviere-2002} established the first energy identity for stationary harmonic maps in supercritical dimensions ($n\ge 3$).

Similarly, the energy quantization for biharmonic maps was first studied in the critical dimension $n=4$, where weakly biharmonic maps are smooth as aforementioned and thus are stationary. More importantly, in this case the energy concentration set $\Sigma$ is a locally finite set by \cite{Scheven-2008-ACV} and energy indentity for statinoary biharmonic maps were established by Wang-Zheng \cite{Wang-Zheng-2012,Wang-Zheng-2013} and Hornung-Moser \cite{Hornung-Moser-2012-APDE}. See also  Liu-Yin \cite{Liu-Yin-2016-MZ} for an alternate proof,	Chen-Zhu \cite{Chen-Zhu-2023-SCM}  for related result on Riemannian 4-manifold, and Laurain-Rivi\`ere \cite{l-r-4} and Guo-Qi-Sun-Wang \cite{Guo-Qi-Sun-Wang-2026-AMS} for more general fourth order linear system.

	
The characterization of $\Theta_{\nu}^{n-4}(x)$  becomes substantially  more difficult in the supercritical dimensions, due to the  basic fact that the energy concentration set $\Sigma$ has higher Hausdorff dimension $\dim_{H}\Sigma=n-4\ge 1$. For harmonic maps, Lin and Rivi\`ere \cite{Lin-Riviere-2002} made the first breakthrough on the energy quantization problem for sphere-valued target $N=\mathbb{S}^{m-1}$ (see also \cite{zhang-zhu-2026} for an extension of this to homogeneous manifolds $N$). Very recently, in another significant breakthrough, Naber and Valtorta \cite{Naber-V-24-arXiv} succeeded in establishing the energy identity for stationary harmonic maps into general closed manifolds $N$ via a completely different new approach. However, due to the fourth order nature of biharmonic equations, it seems very challenging at the moment how to adapt their approach to biharmonic maps into general closed manifolds.
	Nevertheless, for $N=\mathbb{S}^{m-1}$, we are able to extend the seminal work of Lin-Rivi\`ere \cite{Lin-Riviere-2002} to stationary biharmonic mappings. 
	For simplicity, we will consider $\Om=B_1^n$, the unit ball in $\R^n$.
	\begin{theorem}\label{thm: main results} Suppose $u_{k}\in H^{2}(B_1^{n},\S^{m-1})$ $(n\ge 4)$
		is a sequence of sphere-valued stationary biharmonic mappings satisfying
		\[
		u_{k}\wto u\qquad\text{weakly but not strongly in }H^{2}(B_1^{n})
		\]
		and
		\[
		|\De u_{k}|^{2}dx\wto|\De u|^{2}dx+\nu\qquad\text{as weak convergence of Radon measures on $B_1^n$}
		\]
		with
		\[
		\nu=\Theta_{\nu}^{n-4}(x){\cal H}^{n-4}\LLcorner\Sigma
		\]
		for some countably $(n-4)$-rectifiable set $\Si\subset B_1^{n}$. Then  for $\mathcal{H}^{n-4}$ a.e. $x\in\Sigma$,
		there exists an integer $l_x$ such that
		\[
		\Theta_{\nu}^{n-4}(x)=\sum_{i=1}^{l_{x}}E(\phi_{i},\R^{4}),
		\]
		where each $\phi_{i}:\R^{4}\to\S^{m-1}$ is a nontrivial smooth biharmonic
		map.
	\end{theorem}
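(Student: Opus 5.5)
We follow the strategy of Lin--Rivi\`ere, with the analytic input adapted to the fourth order sphere system \eqref{eq:weakly bh into sphere}.

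\textbf{Step 1: reduction to a generic point.} By Scheven's defect measure theory, $\Sigma$ is countably $(n-4)$-rectifiable and $\Theta_\nu^{n-4}$ is upper semicontinuous and bounded. So for $\mathcal H^{n-4}$-a.e.\ $x_0\in\Sigma$ we may assume:
\begin{itemize}
\item[(i)] $\Sigma$ has an approximate tangent plane $L\cong\R^{n-4}$ at $x_0$;
\item[(ii)] $x_0$ is a Lebesgue point of $\Theta_\nu^{n-4}$;
\item[(iii)] the tangent measure of $\nu$ at $x_0$ is $\Theta_\nu^{n-4}(x_0)\mathcal H^{n-4}\LLcorner L$;
\item[(iv)] $r^{4-n}\int_{B_r(x_0)}|\De u|^2\to0$.
\end{itemize}
Blowing up $u_k$ at $x_0$ with scales $r\to0$ and taking a diagonal sequence gives new stationary biharmonic maps $v_k$ on $B_1$. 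By the monotonicity formula (the $\pa_X$ terms), these satisfy
\[
\int_{B_1}\sum_{j=1}^{n-4}|\pa_{y_j}v_k|^2+|\nabla\pa_{y_j}v_k|^2\to0,
\]
where $y$ denotes coordinates on $L$ and $z$ coordinates on $L^\perp\cong\R^4$. They also satisfy $|\De v_k|^2dx\wto\Theta\,\mathcal H^{n-4}\LLcorner L$ with $\Theta=\Theta_\nu^{n-4}(x_0)$, and $v_k\wto$ const. Thus $v_k$ is almost independent of $y$.

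\textbf{Step 2: slicing.} By Fubini and a Chebyshev/selection argument, for most $y\in B^{n-4}_{1/2}$ the slice $w_k=v_k(y,\cdot)$ on $B^4_{1}$ satisfies three things:
\begin{itemize}
\item $\int|\De_z w_k|^2\to\Theta\,\omega$, with the correct normalization;
\item the tangential error terms $\int_{B^4}|\nabla_y v_k|^2+|\nabla\nabla_y v_k|^2+|\nabla_y^2 v_k|^2\to0$ on the slice;
\item $w_k$ is an approximate biharmonic map in $\R^4$, i.e.\ $\De_z^2 w_k+(\dots)w_k=f_k$, where $f_k$ involves $y$-derivatives and tends to $0$ in a suitable weak norm (e.g.\ $W^{-2,2}+L^1$, or better an $L^{2,\infty}$/$H^{-1}$-type Lorentz norm).
\end{itemize}
For the sphere target we use the conservation law form: $\De^2 u\wedge u$ is a divergence. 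Concretely,
\[
\divv\big(\nabla\De u\wedge u-\De u\wedge\nabla u\big)=0,
\]
so on slices we get $\divv_z(\nabla_z\De w_k\wedge w_k-\De w_k\wedge\nabla_z w_k)=\divv_y(\cdot)$, a small error. The $\epsilon$-regularity of Chang--Wang--Yang/Wang, applied on slices with these errors, gives bubble extraction. Away from finitely many points the slices converge strongly, and rescaling at concentration points produces finitely many nontrivial biharmonic maps $\phi_i:\R^4\to\S^{m-1}$, smooth by the four-dimensional regularity and extended across the origin by removable singularity.

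\textbf{Step 3: no neck energy (main obstacle).} We must show that the energy in neck regions $A=B^4_{\delta}\setminus B^4_{R\lambda_k}$ vanishes in the limit. This is where Lin--Rivi\`ere use Lorentz spaces, and we adapt it as follows:
\begin{itemize}
\item[(a)] From $\epsilon$-regularity plus smallness on dyadic annuli, $\nabla w_k$ is small in $L^{4,\infty}$ on the neck. Likewise $\nabla^2 w_k$ is small in $L^{2,\infty}$, i.e.\ the scale-invariant quantities on each annulus are small.
\item[(b)] We show $\|\De w_k\|_{L^{2,1}(A)}$ stays bounded. Write $\De w_k=(\De w_k\cdot w_k)w_k+(\text{tangential part})$. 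The normal part equals $-|\nabla w_k|^2$, which is bounded in $L^{2,1}$ by $\|\nabla w_k\|_{L^{4,2}}^2$; this is controlled by a Hodge/Lorentz–Sobolev argument. For the tangential part $\De w_k\wedge w_k$, use the divergence structure above: a Hodge decomposition in $\R^4$ with Lorentz estimates (Riesz potentials map $L^{4/3,1}\to L^{2,1}$, etc.) bounds it in $L^{2,1}$, up to boundary terms and the error $f_k$.
\item[(c)] Combine via $L^{2,1}$–$L^{2,\infty}$ duality:
\[
\int_A|\De w_k|^2\le\|\De w_k\|_{L^{2,1}}\|\De w_k\|_{L^{2,\infty}}\to0.
\]
\end{itemize}
The delicate point is that the slicing errors from the $y$-directions must be controlled in these Lorentz norms uniformly. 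This requires choosing the slices carefully, via a Hardy–Littlewood maximal function / weak-$L^1$ selection as in Lin--Rivi\`ere, so that $y$-derivative terms are small in $L^{1}$-type norms on all dyadic annuli simultaneously. I expect this uniform control of the fourth order error terms—needing both first and second $y$-derivatives on every annulus—to be the hardest step.

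\textbf{Step 4: conclusion.} Summing the bubble energies with the vanishing neck and body contributions gives, for a.e.\ good slice,
\[
\Theta_\nu^{n-4}(x_0)=\sum_{i=1}^{l_{x_0}}E(\phi_i,\R^4).
\]
The dimension $n=4$ is the classical energy identity, and $n\ge5$ follows from the argument above.
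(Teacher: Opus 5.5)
Your overall architecture (generic point, slicing, bubbles, neck control via $L^{(2,1)}$--$L^{(2,\infty)}$ duality) is the Lin--Rivi\`ere scheme the paper follows. However, two load-bearing steps are missing.

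The decisive gap is the first bullet of your Step 2. Fubini and Chebyshev only show that $F_k(y):=\int_{B^4}|\De v_k(y,z)|^2\xi_2(z)\,dz$ converges to the constant $\Theta\int\xi_2$ weakly in $y$, i.e.\ after integrating against test functions of $y$. They do not show that $F_k(y)\to\Theta\int\xi_2$ for most individual $y$. Nothing in your Step 1 prevents $F_k$ from oscillating in $y$. For instance, take $a(k'y)b_k(z)$ with $a$ periodic, $b_k$ a concentrating profile and $k'\to\infty$ slowly. It satisfies $\na\pa_y v_k\to0$ in $L^2$ and $|\De v_k|^2dx\wto\overline{a^2}\,\HH^{n-4}\LLcorner L$, yet its slice energies are $\approx a(k'y)^2$. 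The reason is that smallness of $\na\pa_y v_k$ does not control $\pa_{y_j}F_k=2\int\De v_k\cdot\De\pa_{y_j}v_k\,\xi_2$, which contains a third derivative. Without this step, your Step 4 only shows that $\Theta$ is a $y$-average of quantized slice energies, and such an average need not be quantized. The missing idea is the paper's Reduction Lemma~\ref{lem: main lemma}, which uses \emph{stationarity} (inner variations), not just the Euler--Lagrange equation. Testing \eqref{eq:id for stationary biHM} with $\psi=\xi e_j$, $j\le n-4$, $\xi=\xi_1^\eta(X_1)\xi_2(X_2)$, gives $\na_aF_k^\eta=\divv_aG_k^\eta+H_k^\eta$. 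Here $\|G_k^\eta\|_{L^1}+\|H_k^\eta\|_{L^1}\to0$ uniformly in $\eta$ by \eqref{eq: X-0}; the terms carrying $\pa_l\xi_1^\eta\sim\eta^{-1}$ are placed inside the divergence. Allard's strong constancy lemma then gives $F_k-c_k\to0$ in $L^1_{\loc}$ for constants $c_k$, and only after that does weak convergence identify $c_k\to\Theta$.

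Step 3, and the slice-wise $\ep$-regularity and bubble extraction in Step 2, are also not yet a proof, for the reason you yourself flag. On a single slice, the $y$-error terms contain fourth-order derivatives carrying $y$-derivatives, e.g.\ $\De_y\De v_k\wedge v_k$ in your $\divv_y$ term, or $\De_y\De_z v_k$ in the slice equation. The $L^2$-smallness of $\na\pa_yv_k$ gives no control of these on a slice, because $y$-derivatives cannot be integrated by parts there. Moreover, your item (a) presupposes small energy on every dyadic annulus of the neck, which you never derive.

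The paper avoids slice equations altogether:
\begin{itemize}
\item Proposition~\ref{prop: key-estimate-1} shows, by contradiction, that the scale-invariant energy of $u_k$ on the $n$-dimensional cylinders $B^{n-4}_\rho(X_1^k)\times\big(B^4_{2\rho}(X_2^k)\setminus B^4_\rho(X_2^k)\big)$ is small uniformly in $\rho\in(\de_kR,1/4)$. Otherwise, rescaling at $\rho_k$ and using \eqref{eq: X-2} produces a second bubble, contradicting the single-bubble reduction. This is where the maximal-function choice of $X_1^k$ enters: it makes blow-up limits independent of $X_1$.
\item The $n$-dimensional $\ep$-regularity for stationary biharmonic maps then gives $|X_2-X_2^k|^2|\na^2u_k(X_1^k,X_2)|\le C\sqrt\ep$ on the neck, hence a small $L^{(2,\infty)}$ norm on the slice.
\item The Lorentz bound on the slice that is paired with it by duality is taken from the higher-order estimate \eqref{eq:higher regularity} for the $n$-dimensional conservation law \eqref{eq:conservation law}, together with a Fubini choice $X_1^k\in E_k$. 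It does not come from a four-dimensional Hodge decomposition with error terms.
\end{itemize}
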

	
	Alternatively, one can also reformulate the above result by using $P_{\S^4}$-biharmonic maps $\phi\colon \S^4\to \S^{m-1}$. Recall that a map $\phi\in C^\infty(\S^4,\S^{m-1})$ is called a $P_{\S^4}$-biharmonic map if it is the critical point of the Paneitz energy functional
	\[
	E(v,\S^4):=\int_{\S^4}\big(|\Delta_{\S^4} v|^2+|\nabla_{\S^4}v|^2\big)\,d\HH^4,
	\]
	where $\Delta_{\S^4}$ and $\nabla_{\S^4}$ denote the Laplace and gradient operator with respect to the standard metric on $\S^4$. Let $\Pi\colon \S^4\to \R^4$ be the stereographic projection from the north pole. Then a smooth map $v\colon \S^4\to \S^{m-1}$ is $P_{\S^4}$-biharmonic if and only if $\bar{v}=v\circ \Pi^{-1}\colon \R^4\to \S^{m-1}$ is biharmonic. Moreover, $E(v,\S^4)=E(\bar{v},\R^4)$.  Thus in Theorem \ref{thm: main results}, we may equivalently write
	\[
	\Theta_{\nu}^{n-4}(x)=\sum_{i=1}^{l_{x}}E(\omega_{i},\S^{4})\qquad\text{for }{\cal H}^{n-4}\text{-a.e. }x\in\Si,
	\]
	where each $\omega_{i}:\S^{4}\to\S^{m-1}$ is a nontrivial $P_{\S^4}$-biharmonic map.

	The general strategy for the proof of Theorem \ref{thm: main results} is similar to that used by Lin-Rivi\'ere \cite{Lin-Riviere-2002} for harmonic maps and it consists of the following three main steps:
	\begin{enumerate}
		\item  A defect measure theory for stationary biharmonic maps so that we may construct the first nontrivial bubble.
		
		\item A key reduction lemma so that we may reduce the energy identity from supercritical dimension to the critical dimension.
		
		\item  $L^{(2,\infty)}$-estimate on the neck domain so that we may apply the $L^{(2,1)}-L^{(2,\infty)}$ duality to conclude vanishing energy on neck domains.
	\end{enumerate}
	To follow the strategy of Lin-Rivi\'ere \cite{Lin-Riviere-2002}, in the above steps, we need a higher order $L^{(2,1)}$-regularity theory for biharmonic maps, which seems to be only available for  symmetric target  manifolds such as $\mathbb{S}^{m-1}$,
	due to the fact that  in our case the  biharmonic equation \eqref{eq:weakly bh into sphere} can be written as the following conservation law
	\begin{equation}\label{eq:conservation law}
		\Delta(\nabla\cdot (\nabla u\wedge u))-2\nabla\cdot (\Delta u\wedge \nabla u)=0,
	\end{equation}
	 where $\nabla\cdot (\nabla u\wedge u)=\sum\limits_{i=1}^n\partial_i(\partial_iu\wedge u)$ and $\nabla\cdot (\Delta u\wedge \nabla u)=\sum\limits_{i=1}^n\partial_i(\Delta u\wedge \partial_iu)$.
	We would like to mention that a conservation law similar to \eqref{eq:conservation law} remains open for general target manifolds in supercritical dimensions; for critical dimension $n=4$, see Lamm-Rivi\`ere \cite{Lamm-Riviere-2008}.
	
	In our case, the first step was already established by Scheven in \cite{Scheven-2008-ACV}.  For later steps, we shall follow the strategy of Lin-Rivi\'ere \cite{Lin-Riviere-2002}, but make  suitable modifications  in order to carry out the entire blow-up process. In step 2, we provide a fully detailed proof of the reduction lemma, which was  only sketched by \cite{Lamm-Riviere-2008}. In the  last step, we  devote some effort to simplify the $L^{(2,\infty)}$-estimate on $\Delta_{X_2}u_i(X_1^i,X_2)$; see Lemma \ref{lem: small endpoint} and Proposition \ref{prop: key-estimate-1} below. In particular, we provide a direct proof of the key estimate in Proposition \ref{prop: key-estimate-1},
 instead of performing twice the conformal change of coordinates  as in \cite{Lin-Riviere-2002}. Indeed, if one tries to use a similar change of coordinates for biharmonic mappings,
then the natural change of time scale (corresponding to $t_i$ in \cite{Lin-Riviere-2002}) would be a power function, which fails to be conformal in dimension four.
This would  induce some additional difficulties, namely,  the equation in new coordinate fails to be biharmonic
and thus one  may not be able to apply the $\varepsilon$-regularity for  biharmonic mappings to the new equation to establish  the necessary estimates.


	\section{Proof of energy identity theorem}
	
	This section is devoted to the proof of Theorem \ref{thm: main results}. We divide it into three subsections.
	\subsection{Construction of the first bubble}
	
	We will outline the blow-up procedure to derive the first bubble
	by Scheven \cite[Section 3]{Scheven-2008-ACV}.
	
	 Under the same assumptions as in Theorem \ref{thm: main results}, after selecting a generic point $x_0$ in the energy concentration set $\Sigma$
	(such a point exists for ${\cal H}^{n-4}$ -a.e. $x\in \Sigma$)
	and considering the rescaled sequence $u_k(x_0+r_k\cdot)$ at $x_0$ with scalings $r_k\to 0$, we may assume, without loss of generality,
	that $x_0=0$ and  still denote the rescaled sequence of stationary biharmonic
	maps  as $u_{k}\in H^{2}(B_{2}^{n},N)$ with uniformly bounded  energies, that is,
	$\sup_{k\ge1}\|u_{k}\|_{H^{2}(B_{2}^{n})}\le\La$, such that
	\[
	u_{k}\wto c\quad\text{weakly in }H^{2}(B_{2}^{n})\text{ and strongly in }H^{1}(B_{2}^{n}),
	\]
	\[
	u_{k}\to c\quad\text{ in }C_{\loc}^{2}(B_{2}^{n}\backslash P)
	\]
	 for some constant $c\in \mathbb{S}^{m-1}$,
	and
	\[
	|\De u_{k}|^{2}dx\wto\Theta^{n-4}_{\nu}(0){\cal H}^{n-4}\LLcorner P
	\]
	 as weak convergence of Radon measures, where
	\[
	P=B_{2}^{n-4}\times\{0\}.
	\]
	By \cite[Lemma 2.4]{Scheven-2008-ACV},
	we can further assume that
	\begin{equation}
		\sup_{0<r<1,x\in B_{1}^{n}}r^{4-n}\int_{B_{r}^{n}}(|\De u_{k}|^{2}+r^{-2}|\na u_{k}|^{2})dx\le C,\quad\forall\,k\ge1.\label{eq: uniform morrey norm}
	\end{equation}
	for some universal constant $C=C(n,\La)>0$.
	
	To make the presentation more clear, we write $x=(X_1, X_2)$, where  $X_{1}=(x_{1},\cdots,x_{n-4})$ and $X_{2}=(x_{n-3},\cdots,x_{n})$.
	It follows from \cite[Section 3]{Scheven-2008-ACV} that there exist
	points $(X_{1}^{i},X_{2}^{i})\in B_1^{n-4}\times B_1^{4}$ and positive
	constants $\de_{i}\to0$  such that the following properties hold:
	\begin{itemize}
	\item[(i)] There holds
	\begin{equation}
		\sum_{j=1}^{n-4}\int_{B_1^{n}}|\na\pa_{x_{j}}u_{k}|^{2}dx+\int_{B_1^{n}}|\na u_{k}|^{2}dx\to0.\label{eq: X-0}
	\end{equation}
 The fact that the second term converges to zero follows from the compactness embedding $H^2(B^n)\hookrightarrow H^1(B^n)$ and our assumption.	Moreover, by \cite[Section 3]{Scheven-2008-ACV},
	\begin{equation}
		u_{k}\text{ is smooth in a neighborhood of }(X_{1}^{k},X_{2})\text{ for all }X_{2}\in B_1^{4}\label{eq: X-1}
	\end{equation}
	and, as $k\to\wq$, there holds
	\begin{equation}
		\sup_{0<r<1/2}r^{4-n}\int_{B_{r}^{n-4}(X_{1}^{k})\times B_{1}^{4}}\Big(\sum_{j=1}^{n-4}|\na\pa_{x_{j}}u_{k}|^{2}+r^{-2} |\na u_{k}|^{2}\Big)\,dX_1dX_2\to0\label{eq: X-2}
	\end{equation}
	
	\item[(ii)] There exist $\de_{k}\to0$ and $X_{2}^{k}\to0$ such that
	\begin{equation}
		\begin{aligned} & \quad\de_{k}^{4-n}\int_{B_{\de_{k}}^{n-4}(X_{1}^{k})\times B_{\de_{k}}^{4}(X_{2}^{k})}\left(|\De u_{k}|^{2}+\de_{k}^{-2}|\na u_{k}|^{2}\right)\,dX_1dX_2\\
			& =\max_{X_{2}\in B_1^{4}}\de_{k}^{4-n}\int_{B_{\de_{k}}^{n-4}(X_{1}^{k})\times B_{\de_{k}}^{4}(X_{2})}\left(|\De u_{k}|^{2}+\de_{k}^{-2}|\na u_{k}|^{2}\right)\,dX_1dX_2=\frac{\ep_{0}}{c(n)}
		\end{aligned}
		\label{eq: X-1-1}
	\end{equation}
	for some large constants $c(n)>0,$ whose value will be determined later.  Now, we define the blowing-up sequence
	\[
	v_{k}(y)=u_{k}\big((X_1^{k}, X_2^k)+\de_{k}y\big), \ y\in \delta_k^{-1}(B_1^n\setminus (X_1^k,X_2^k))
	\]
	there holds
	\begin{align*}
	\int_{B_{1}^{n-4}\times B_{1}^{4}}\left(|\De v_{k}|^{2}+|\na v_{k}|^{2}\right)
	&=\max_{X_{2}\in  \delta_k^{-1}(B_1^4\setminus\{X_2^k\})}\int_{B_{1}^{n-4}\times B_{1}^{4}(X_{2})}\left(|\De v_{k}|^{2}+|\na v_{k}|^{2}\right)\\
	&=\frac{\ep_{0}}{c(n)}>0.
	\end{align*}
	
	\item[(iii)] Up to a subsequence, there exists a nontrivial smooth  map
	$v(X_{1},X_{2})=v(X_{2})\in C^{\wq}\cap \dot{H}^{2}(\R^{n},\S^{k})$ such
	that
	\[
	v_{k}\to v\quad\text{in }C_{\loc}^{2}(\R^{n}).
	\]
	Moreover, $v:\R^{4}\to \mathbb{S}^{m-1}$ is a smooth biharmonic map with finite energy.
	\end{itemize}
	
\subsection{Reduction to dimension $4$}	
    First, we briefly recall the definition of Lorentz-Sobolev spaces. 	For a measurable function $f\colon \Om\to\R$, denote by $\de_{f}(t)=|\{x\in\Om:|f(x)|>t\}|$
	its distributional function and by $f^{\ast}(t)=\inf\{s>0:\de_{f}(s)\le t\}$,
	$t\ge0$, the nonincreasing rearrangement of $|f|$. Define
	\begin{eqnarray*}
		f^{\ast\ast}(t)\equiv\frac{1}{t}\int_{0}^{t}f^{\ast}(s)\D s, &  & t>0.
	\end{eqnarray*}
	The Lorentz space $L^{(p,q)}(\Om)$ ($1<p<\wq,1\le q\le\wq$)
	is the space of measurable functions $f:\Om\to\R$ such that
	\[
	\|f\|_{L^{(p,q)}(\Om)}\equiv\begin{cases}\displaystyle
		\left(\int_{0}^{\wq}(t^{1/p}f^{\ast\ast}(t))^{q}\frac{\D t}{t}\right)^{1/q}, & \text{if }1\le q<\wq,\\
		\displaystyle\sup_{t>0}t^{1/p}f^{\ast\ast}(t) & \text{if }q=\wq
	\end{cases}
	\]
	is finite.	
	
	The first order \emph{Lorentz-Sobolev space} $W^{1,(p,q)}(\Om)$ for $1<p<\wq,1\le q\le\wq$
	consists of functions $f\in L^{(p,q)}(\Om)$  with weak gradient $\na f\in L^{(p,q)}(\Om)$.
	A natural norm for a Lorentz-Sobolev function $f\in W^{1,(p,q)}(\Om)$ is defined  by
	\[
	\|f\|_{W^{1,(p,q)}(\Om)}=\Big(\|f\|_{L^{(p,q)}(\Om)}^{p}+\|\na f\|_{L^{(p,q)}(\Om)}^{p}\Big)^{1/p}.
	\]
	Higher order Lorentz-Sobolev spaces can be defined analogously.	
	
	Let $u\in H^2(\Omega,\mathbb{S}^{m-1})$ be a weakly biharmonic map such that  the conservation law \eqref{eq:conservation law} is satisfied. Since $u\in H^2(\Omega,\mathbb{S}^{m-1})$, for any ball $B\subset\subset \Omega$, $\Delta u\in L^2(B)$, $\nabla u\in L^{(4,2)}(B)$ and so $\Delta u\wedge \nabla u\in L^{(4/3,1)}(B)$. It follows from equation \eqref{eq:conservation law} and elliptic regularity theory in Lorentz spaces that
	\begin{equation}\label{eq:higher regularity}
		\|u\|_{W^{3,(\frac{4}{3},1)}(B)}\leq C(n,m)\|u\|_{H^2(\Omega)}.
	\end{equation}
  Applying \eqref{eq:higher regularity} to $u_k$ and noticing $\|u_k\|_{H^2(\Omega)}\leq \Lambda$ for all $k$, we obtain
	$$\|u_{k}\|_{W^{3,(\frac{4}{3},1)}(Q_{2/3}^{n})}\le C(n,m)\Lambda,$$
	where $Q_r^n=B_r^{n-4}\times B_r^4$,	and  hence by Fubini's Theorem
	\begin{equation}
		\sum_{j=1}^{3}\left\Vert \na^{j}u_{k}(X_{1},\cdot)\right\Vert _{L^{(\frac{4}{j},1)}(B_{2/3}^{4})}\le C(n,m)\Lambda\label{eq: X-3}
	\end{equation}
	 holds for all $X_{1}\in E_{k}\subset B_{2/3}^{n-4}$,
	 with ${\cal H}^{n-4}(E_{k})\ge0.99{\cal H}^{n-4}(B_{2/3}^{n-4})$,
	for all sufficiently large $k\ge1$.  Furthermore, we may find a subset
	$F_{k}\subset B_{2/3}^{n-4}$, with ${\cal H}^{n-4}(F_{k})\ge0.99{\cal H}^{n-4}(B_{2/3}^{n-4})$,
	such that for all $X_{1}\in F_{k}$,  both (\ref{eq: X-1})
	and (\ref{eq: X-2}) hold.
	
	\begin{lemma}[Reduction Lemma]\label{lem: main lemma}  For ${\cal H}^{n-4}$-a.e.
		$X_{1}^{k}\in E_{k}\cap F_{k}$,  there holds
		\[
		\lim_{k\to\wq}\int_{B_{1/2}^{4}(0)}\left|\De u_{k}(X_{1}^{k},X_{2})\right|^{2}\,dX_{2}=\Theta_{\nu}^{n-4}(0).
		\]
	\end{lemma}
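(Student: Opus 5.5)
We prove the Reduction Lemma in the spirit of Lin--Rivi\`ere: first show that the slice energy is almost constant in $X_1$, then average. Define
\[
f_k(X_1):=\int_{B_{1/2}^4}|\De_{X_2}u_k(X_1,X_2)|^2\,dX_2 .
\]
By \eqref{eq: X-0}, every mixed second derivative $\na\pa_{x_j}u_k$, $j\le n-4$, tends to zero in $L^2(B_1^n)$. Since $\De u_k=\De_{X_2}u_k+\sum_{j\le n-4}\pa_{x_j}^2u_k$, this gives
\[
\int_{B_1^n}\big||\De u_k|^2-|\De_{X_2}u_k|^2\big|\to 0 .
\]
Hence the measures $|\De_{X_2}u_k|^2dx$ converge to the same limit $\Theta_\nu^{n-4}(0)\mathcal H^{n-4}\LLcorner P$. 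For a cutoff $\eta(X_2)$ equal to $1$ on $B_{1/2}^4$ and supported in $B_{2/3}^4$, we then have $\int\phi(X_1)\eta(X_2)|\De_{X_2}u_k|^2\to\Theta_\nu^{n-4}(0)\int\phi$ for every $\phi\in C_c(B_{2/3}^{n-4})$. The energy of $u_k$ concentrates near $X_2=0$ because $u_k\to c$ in $C^2_{\loc}$ away from $P$. So $f_k\to\Theta_\nu^{n-4}(0)$ weakly, i.e. in the sense of distributions on $B_{2/3}^{n-4}$.

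The main step is to upgrade weak convergence of $f_k$ to pointwise convergence at the good points $X_1^k$. For this we control $\na_{X_1}f_k$. Formally,
\[
\pa_{x_j}f_k(X_1)=2\int_{B_{1/2}^4}\De_{X_2}u_k\cdot\De_{X_2}\pa_{x_j}u_k\,dX_2 .
\]
To make this small we use the stationarity identity \eqref{eq:id for stationary biHM} with $\xi=\phi(X_1)\eta(X_2)e_j$, $j\le n-4$. After expanding, every term on the right-hand side, and the term on the left coming from $\pa_j\phi$, contains either a factor $\na\pa_{x_j}u_k$ or a derivative of the cutoff. Terms with $\na\eta$ are supported away from $P$, where $u_k\to c$ in $C^2$, so they vanish in the limit. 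This yields
\[
\Big|\int\pa_j\phi(X_1)\,\eta\,|\De u_k|^2\Big|\le C\|\phi\|_{C^1}\Big(\|\na\pa_{X_1}u_k\|_{L^2}+o(1)\Big)\to0,
\]
which shows that $\na_{X_1}\big(\int\eta|\De u_k|^2dX_2\big)\to0$ in $W^{-1,1}$ after integrating by parts in the $\pa_{x_j}u_k$ terms. Combined with the uniform $L^1$ bound on the slice functions $g_k(X_1)=\int\eta|\De u_k|^2dX_2$, this does not yet give strong convergence. Strong convergence comes instead from Cauchy--Schwarz:
\[
\int_{B_{2/3}^{n-4}}|\na_{X_1}g_k|\,dX_1\le 2\|\De u_k\|_{L^2}\,\|\na\pa_{X_1}\na u_k\|_{L^2}\ ,
\]
which needs third derivatives. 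Here the Lorentz bound \eqref{eq:higher regularity}, namely $\na^3u_k\in L^{(4/3,1)}$, is not quite enough. We therefore expect the main obstacle to be a BV-type bound $\|\na_{X_1}g_k\|_{L^1}\to0$ (or a uniform bound). Our first attempt is a Poincaré/Fubini argument that uses $\|\na\pa_{X_1}u_k\|_{L^2}\to0$ together with the $L^{(4/3,1)}$-bound on $\na^3u_k$ to bound $\|\na_{X_1}g_k\|_{L^1}$ through a duality pairing $L^{(4/3,1)}\times L^{(4,\infty)}$ on each slice. If that fails, we argue via the monotonicity formula on the scales $r$ in \eqref{eq: X-2} instead.

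Given $g_k\to\Theta_\nu^{n-4}(0)$ in $L^1(B_{2/3}^{n-4})$, we pass to a subsequence along which $g_k\to\Theta_\nu^{n-4}(0)$ for $\mathcal H^{n-4}$-a.e. $X_1$, and in particular on the good sets $E_k\cap F_k$ up to a set of small measure. At such a point $X_1^k$, the difference $|g_k(X_1^k)-f_k(X_1^k)|$ is bounded by the slice energy on $B_{2/3}^4\setminus B_{1/2}^4$, which tends to zero by the $C^2_{\loc}$ convergence away from $P$. It is also bounded by the slice mixed-derivative energy $\int|\na\pa_{X_1}u_k(X_1^k,\cdot)|^2dX_2$, which tends to zero by \eqref{eq: X-2} as $r\to0$ (Lebesgue point). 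Together these give $f_k(X_1^k)\to\Theta_\nu^{n-4}(0)$.
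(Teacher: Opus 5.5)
There is a genuine gap at the step that carries the lemma: upgrading the weak convergence $g_k\wto\Theta_\nu^{n-4}(0)$ to convergence in $L^1_{\loc}(B^{n-4})$. From the stationarity identity \eqref{eq:id for stationary biHM} with $\xi=\phi(X_1)\eta(X_2)e_j$ you reach, modulo the caveat below, the structure $\pa_j g_k=\divv G_k+H_k$ with $\|G_k\|_{L^1}+\|H_k\|_{L^1}\to0$. You then assert that this, together with the $L^1$ bound on $g_k$, ``does not yet give strong convergence''. It does. That implication is exactly Allard's strong constancy lemma \cite[Lemma 2.7]{Lin-1999-Annals}, which produces constants $c_k$ with $\|g_k-c_k\|_{L^1_{\loc}}\to0$. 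This is the paper's route. It applies the lemma to the slice energies $F_k^\eta$ (mollified in $X_1$ by $\xi_1^\eta$) and lets $\eta\to0$ to get $\|F_k^0-c_k\|_{L^1_{\loc}}\to0$. It then identifies $\lim_k c_k=\Theta_\nu^{n-4}(0)$ from the weak convergence, which you also have, via Lemma \ref{lemma:simple analysis}. No BV bound is needed. Without the constancy lemma, however, your argument stops at weak convergence, which says nothing about the values $f_k(X_1^k)$. Your closing step is fine once $L^1_{\loc}$ convergence is available; there you use Egorov on the good sets and compare $\De$ with $\De_{X_2}$ on the slice via \eqref{eq: X-2}.

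The replacements you suggest do not close. Your bound for $\|\na_{X_1}g_k\|_{L^1}$ requires smallness of $\na^2\pa_{X_1}u_k$ in $L^2$, i.e.\ of third derivatives. But \eqref{eq: X-0} and \eqref{eq: X-2} only make $\na\pa_{X_1}u_k$ small, and \eqref{eq:higher regularity} only bounds $\na^3u_k$ in $L^{(4/3,1)}$. The slice pairing $L^{(4/3,1)}\times L^{(4,\wq)}$ would need $\De_{X_2}u_k(X_1,\cdot)\in L^{(4,\wq)}(B^4)$, whereas \eqref{eq: X-3} only gives $\na^2u_k(X_1,\cdot)\in L^{(2,1)}$. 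Even then it would yield a bound, not a vanishing quantity, and the monotonicity fallback is unspecified.

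The caveat is this. Allard's lemma needs first-order control $|\langle\pa_jg_k,\phi\rangle|\le o(1)\|\phi\|_{C^1}$, and your displayed inequality overstates what the computation gives. The $\De\xi$ term contributes $2\int\De u_k\cdot\pa_ju_k\,\De_{X_1}\phi\,\eta\,dx$, which is only $o(1)\|\phi\|_{C^2}$ (using $\|\na u_k\|_{L^2}\to0$). A purely second-order remainder is not enough for strong constancy: $g_k=1+\cos(kx_1)$ satisfies $\pa_1g_k=\pa_1^2(k^{-1}\sin kx_1)$ but does not converge strongly. So this term must be put in first-order form before the lemma is applied. In the paper it is the $\De\xi$ contribution placed in $H_k^\eta$ after mollification in $X_1$, and this is the point where the argument needs care.
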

	\begin{proof}
		For $0<\de\ll1$, let $\xi\in C_{0}^{\wq}(\R^{n})$ with ${\rm supp}(\xi)\subset Q_{\de}^n$
		 and for $a\in Q_{1-\de}^n$, define
		\[
		F_{k}(a)=\int_{Q_{1}^n}|\De u_{k}(x+a)|^{2}\xi(x)\,dx=\int_{Q_{1}^n}|\De u_{k}(x)|^{2}\xi(x-a)\,dx.
		\]
		Recall that the stationarity identity \eqref{eq:id for stationary biHM} of $u_{k}$ implies
		\[
		\int|\De u_{k}|^{2}{\rm div}\psi=\int\De u_{k}\cdot\left(\sum_{j=1}^{n}4\na\pa_{j}u_{k}\cdot\na\psi^{j}+\sum_{j=1}^{n}2\pa_{j}u_{k}\De\psi^{j}\right)
		\]
		for all $\psi\in C_{0}^{\wq}(Q_{1}^n,\R^{n})$. Hence, by taking $\psi=\xi e_{j}$,
		we deduce
		\[
		\pa_{a_{j}}F_{k}(a)=-\int_{Q_{1}^n}\De u_{k}(x+a)\cdot\left(4\na\pa_{j}u_{k}(x+a)\cdot\na\xi+2\pa_{j}u_{k}(x+a)\De\xi\right).
		\]
		
		Now we choose $\xi(X_{1},X_{2})=\xi_{1}^{\eta}(X_{1})\xi_{2}(X_{2})$,
		with $\xi_{1}^{\eta}=\eta^{4-n}\xi_{1}(\cdot/\eta)$ for some $0\le\xi_{1}\in C_{0}^{\wq}(B_1^{n-4})$
		and $\eta>0$ sufficiently small, and $0\le\xi_{2}\in C_{0}^{\wq}(B_1^{4})$.
		Define  function
		\[
		F_{k}^{\eta}(a)=\int_{Q_{1}}|\De u_{k}(X_{1}+a,X_{2})|^{2}\xi_{1}^{\eta}(X_{1})\xi_{2}(X_{2})dx\qquad \text{for } a\in B_{1-\eta}^{n-4}.
		\]
		It follows that, for $1\le j\le n-4$,
		\[
		\begin{aligned}\pa_{a_{j}}F_{k}^{\eta}(a) & =-\int_{Q_{1}^n}\De u_{k}\cdot\Big(\sum_{l=1}^{n}4\pa_{lj}u_{k}\cdot\pa_{l}\xi(X_{1}-a,X_{2})+2\pa_{j}u_{k}\De\xi(X_{1}-a,X_{2})\Big)\\
			& =\sum_{l=1}^{n-4}\pa_{a_{l}}\Big(4\int_{Q_{1}^n}\De u_{k}(X_{1}+a,X_{2})\cdot\pa_{lj}u_{k}(X_{1}+a,X_{2})\xi_{1}^{\eta}\xi_{2}\Big)\\
			& \qquad-\sum_{l=n-3}^{n}\int_{Q_{1}}\De u_{k}(X_{1}+a,X_{2})\cdot4\pa_{lj}u_{k}(X_{1}+a,X_{2})\xi_{1}^{\eta}\pa_{l}\xi_{2}\\
			& \qquad-\int_{Q_{1}^n}\De u_{k}(X_{1}+a,X_{2})\cdot2\pa_{j}u_{k}(X_{1}+a,X_{2})\De\xi\\
			& :={\rm div}_aG_{k}^{\eta}(a)+H_{k}^{\eta}(a),
		\end{aligned}
		\]
		where
		\[
		G_{k}^{\eta}(a)=\Big(4\int_{Q_{1}^n}\De u_{k}(X_{1}+a,X_{2})\cdot\pa_{lj}u_{k}(X_{1}+a,X_{2})\xi_{1}^{\eta}\xi_{2}\Big)_{1\le l\le n-4}
		\]
		and
		\[
		\begin{aligned}H_{k}^{\eta}(a) & =-\sum_{l=n-3}^{n}\int_{Q_{1}^n}\De u_{k}(X_{1}+a,X_{2})\cdot4\pa_{lj}u_{k}(X_{1}+a,X_{2})\xi_{1}^{\eta}\pa_{l}\xi_{2}\\
			& \qquad-\int_{Q_{1}^n}\De u_{k}(X_{1}+a,X_{2})\cdot2\pa_{j}u_{k}(X_{1}+a,X_{2})\De\xi.
		\end{aligned}
		\]
		By (\ref{eq: X-0}), there holds
		$$\|G_{k}^{\eta}\|_{L^{1}(B_{1-\eta}^{n-4})}+\|H_{k}^{\eta}\|_{L^{1}(B_{1-\eta}^{n-4})}\to0$$
		as $k\to\wq$ uniformly in $\eta$.
		Thus, by Allard's strong
		constancy Lemma (see e.g. \cite[Lemma 2.7]{Lin-1999-Annals}) we deduce that
		\begin{equation}\label{eq: X-1}
			\|F_{k}^{\eta}-c_{k}^{\eta}\|_{L_{\loc}^{1}(B_{1-\eta}^{n-4})}\to0\qquad\text{as }k\to\wq 
		\end{equation}
		for some constant $c_{k}^{\eta}$ and the above convergence is uniform with respect to  $\eta$. 
		
		On the other hand, it is straightforward to show that $F_{k}^{\eta}(a)\to F_{k}^{0}(a)$
		in $L_{\loc}^{1}(B^{n-4}_1)$ as $\eta\to0$, where
		\[
		F_{k}^{0}(a)=\int_{B_1^{4}}|\De u_{k}(a,X_{2})|^{2}\xi_{2}(X_{2})dX_{2}, \qquad  a\in B_1^{n-4}.
		\]
		Combining with \eqref{eq: X-1}, this implies that $\{c_{k}^{\eta}\}_{\eta>0}$ is a Cauchy sequence
		as $\eta\to0$ (for each $k\gg 1$). Hence there exist $\{c_{k}\}\subset\R$ such
		that
		\begin{equation}\label{eq:consequence of Allard strong constancy lemma}
			\lim_{k\to\wq}\int_{B_{r}^{n-4}(X_{1})}|F_{k}^{0}(a)-c_{k}|\,da=0
		\end{equation}
		for any $B_{r}^{n-4}(X_{1})\Subset B^{n-4}_1$.
		
		Next, we claim that for any $B_{r}^{n-4}(X_{1})\Subset B^{n-4}_1$,  it holds
		\begin{equation}\label{eq:key for reduction}
			\lim_{k\to\wq}\int_{B_{r}^{n-4}(X_{1})}F_{k}^{0}(a)\,da =\Theta_{\nu}^{n-4}(0){\cal H}^{n-4}(B_{r}^{n-4}).
		\end{equation}
		
		Indeed, for any $B_{r}^{n-4}(X_{1})\Subset B^{n-4}_1$, take $\xi_1^k\in C_0^\infty(B_{r}^{n-4}(X_{1}))$ such that
		 $\xi_1^j$ is monotonically increasing and converges to the constant function $1$, as $j\to\infty$, on $B_{r}^{n-4}(X_{1})$, and $\xi_{2}\equiv1$ on $B_{1/2}^{4}$.
		Let $$C_k^j=\int_{B_{r}^{n-4}(X_{1})}F_{k}^{0}(a)\xi_1^j(a)\,da.$$
		Then for each fixed $k$, $C_k^j$ is monotonically increasing with respect to $j$.
		Moreover, since $|\De u_{k}|^{2}dx\wto\Theta_{\nu}^{n-4}(0){\cal H}^{n-4}\LLcorner B_1^{n-4}\times\{0\}$, we have
		$$
		\begin{aligned}
			\lim_{k\to \infty}C_k^j&=\lim_{k\to \infty}\int_{B_{r}^{n-4}(X_{1})}F_{k}^{0}(a)\xi_1^j(a)da\\
			&=\lim_{k\to\wq}\int_{B_{r}^{n-4}(X_{1})\times B_1^{4}}|\De u_{k}(x)|^{2}\xi_1^j(a)\xi_{2}(X_{2})dx\\
			&=\Theta_{\nu}^{n-4}(0)\int_{B_{r}^{n-4}(X_{1})}\xi_1^j(a)\,d{\cal H}^{n-4}(a)=:C^j,
		\end{aligned}
		$$
		and the monotone convergence theorem gives
		$$
		\lim_{j\to \infty} C^j=\lim_{j\to \infty}\Theta_{\nu}^{n-4}(0)\int_{B_{r}^{n-4}(X_{1})\times\{0\}}\xi_1^j(a)d{\cal H}^{n-4}(a)=\Theta_{\nu}^{n-4}(0){\cal H}^{n-4}(B_{r}^{n-4}).
		$$
		Similarly, by the monotone convergence theorem, we have
		$$
		\lim_{j\to\infty}C_k^j=\lim_{j\to \infty}\int_{B_{r}^{n-4}(X_{1})}F_{k}^{0}(a)\xi_1^j(a)da=\int_{B_{r}^{n-4}(X_{1})}F_{k}^{0}(a)da=:C_k,
		$$
		and
		$$\lim_{k\to \infty}C_k=(\lim_{k\to\infty}c_k){\cal H}^{n-4}(B_{r}^{n-4}).$$
		Set $d^j=\int_{B_{r}^{n-4}(X_{1})}\xi_1^j(a)da$.
		Then by \eqref{eq:consequence of Allard strong constancy lemma} and our choice of $\xi_1^j$, we have
		\[
		\begin{aligned}
			\lim_{k\to\infty}\sup_{j}|C_k^j-c_kd^j|&\leq
			\lim_{k\to\infty}\sup_{j}\int_{B_{r}^{n-4}(X_{1})}\left|F_{k}^{0}(a)\xi_1^j(a)-c_k\xi_1^j(a)\right|\,da\\
			&\leq \lim_{k\to\infty}\int_{B_{r}^{n-4}(X_{1})}\left|F_{k}^{0}(a)-c_k\right|\,da=0.
		\end{aligned}
		\]
		This implies that as $k$ is large enough, $C_k^j$ is close to $c_kd^j$, uniformly in $j$. In particular, $C_k^j$ converges to $C^j$ as $k\to\infty$ uniformly in $j$. Applying Lemma \ref{lemma:simple analysis} below gives
		\[
		\lim_{k\to \infty}\lim_{j\to \infty}C_k^j=\lim_{j\to \infty}\lim_{k\to \infty}C_k^j.
		\]
		It follows
		$$
		\lim_{k\to\wq}\int_{B_{r}^{n-4}(X_{1})}F_{k}^{0}(a)da =\Theta_{\nu}^{n-4}(0){\cal H}^{n-4}(B_{r}^{n-4}).
		$$
		Hence, for any $B_{r}^{n-4}(X_{1})\Subset B^{n-4}$,
		\[
		\lim_{k\to\wq}\fint_{B_{r}^{n-4}(X_{1})}F_{k}^{0}(a)da=\Theta_{\nu}^{n-4}(0).
		\]
		On the other hand, note that
		\[
		\begin{aligned}\int_{B_{r}^{n-4}(X_{1})}F_{k}^{0}(a)\,da & =\int_{B_{r}^{n-4}(X_{1})\times\left(B_1^{4}\backslash B_{1/2}^{4}\right)}|\De u_{k}(a,X_{2})|^{2}\xi_{2}(X_{2})\,dadX_{2}\\
			& \quad+\int_{B_{r}^{n-4}(X_{1})\times B_{1/2}^{4}}|\De u_{k}(a,X_{2})|^{2}\,dadX_{2}.
		\end{aligned}
		\]
		The first integral in the right hand side vanishes as $k\to\wq$, since $\De u_{k}\to0$ uniformly
		on $B_{r}^{n-4}(X_{1})\times\left(B^{4}\backslash B_{1/2}^{4}\right)$.
		Hence
		\[
		\lim_{k\to\wq}\fint_{B_{r}^{n-4}(X_{1})}\int_{B_{1/2}^{4}}|\De u_{k}(X_{1},X_{2})|^{2}dX_{2}dX_{1}=\Theta_{\nu}^{n-4}(0).
		\]
		Since the above limit holds for all $B_{r}^{n-4}(X_{1})\Subset B_1^{n-4}$,
		the desired result follows.
	\end{proof}
	
	In the proof above, we used the following simple  fact which may be an exercise in some textbook.
	
	\begin{lemma}\label{lemma:simple analysis}
		Let $\{C_i^j\}_{i,j \in \mathbb{N}}$ be a sequence of nonnegative numbers such that for each fixed $i$, the sequence $C_i^j$ is monotonically increasing in $j$. Assume that
		\[
		C_i^j\overset{i}{\rightrightarrows} C^j, \quad \lim_{j\to\infty} C^j = \alpha,
		\]
		and
		\[
		\lim_{j\to\infty} C_i^j = C_i, \quad \lim_{i\to\infty} C_i = \beta.
		\]
		Then $\alpha = \beta$.
	\end{lemma}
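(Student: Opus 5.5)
The plan is a standard $\varepsilon/3$-type interchange-of-limits argument. The only place the hypotheses interact is the uniform convergence $C_i^j\rightrightarrows C^j$ (uniform in $j$ as $i\to\infty$). Monotonicity in $j$ only guarantees that the limits $C_i=\lim_j C_i^j$ exist, possibly as $+\infty$. The hypotheses give these limits, and finiteness of $C_i$ for large $i$ will come out of the argument.

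First, fix $\varepsilon>0$ and use the uniform convergence to choose $I=I(\varepsilon)$ such that
\[
|C_i^j-C^j|<\varepsilon\qquad\text{for all } i\ge I \text{ and all } j\in\mathbb{N}.
\]

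Second, fix any $i\ge I$ and let $j\to\infty$ in this inequality. Since $C_i^j\to C_i$ and $C^j\to\alpha$, we get $|C_i-\alpha|\le\varepsilon$. In particular $C_i$ is finite for $i\ge I$. Third, let $i\to\infty$ and use $C_i\to\beta$ to get $|\beta-\alpha|\le\varepsilon$. Since $\varepsilon>0$ is arbitrary, $\alpha=\beta$.

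There is no genuine obstacle. The one point to handle carefully is passing to the limit in $j$ inside an inequality that holds uniformly in $j$; this requires only that both $C_i^j$ and $C^j$ converge as $j\to\infty$, which is given. The nonnegativity and monotonicity assumptions are not needed beyond ensuring, in the application within Lemma \ref{lem: main lemma}, that the limits defining $C_k$ and $\Theta_\nu^{n-4}(0)\mathcal{H}^{n-4}(B_r^{n-4})$ exist via monotone convergence.
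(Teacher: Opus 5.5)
Your proposal is correct and is essentially the paper's argument. Both proofs use the convergence of $C_i^j$ to $C^j$, uniform in $j$, to fix one sufficiently large $i$ and then close with the triangle inequality. The paper picks explicit indices $i_0$ and $j_0$ and obtains the bound $3\varepsilon$, while you pass to the limit in the uniform inequality first in $j$ and then in $i$. Your remark that monotonicity and nonnegativity are not actually needed for the lemma itself is also accurate.
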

	
	\begin{proof}
Let $\ep>0$. Take sufficiently large $i_1$ s.t. $|\be-C_{i}|<\ep$ for all $i\ge i_1$. Since  $ C_i^j\overset{i}{\rightrightarrows} C^j$ as $i\to \wq$, there exists $i_2\gg 1$ s.t. $|C_{i}^j-C^j|<\ep$ for $i\ge i_2$ and all $j$.  Let $i_0=\max\{i_1, i_2\}$. Since $\lim_{j\to\infty} C^j = \alpha$ and $\lim_{j\to\infty} C_i^j = C_i$, there exists $j_0\gg 1$ s.t. $|C_{i_0}-C_{i_0}^j|+|\alpha-C^j|<\ep$ for $j\ge j_0$.  Therefore, for $j\ge j_{0}$, we have
\[
|\beta-\alpha|\leq |\be-C^j|+|\alpha-C^j|
\le |\be- C_{i_0}|+\ep+|C_{i_0}^j-C^j|<3\ep.
\]
The proof is complete by sending $\ep$ to 0.
	\end{proof}

	\subsection{$L^{(2,\wq)}$-estimate of $\De_{X_{2}}u_{k}(X_{1}^{k},X_{2})$}
	With the reduction Lemma 2.1 at hand, we find that
	\[
	\begin{aligned}&\int_{B_{1/2}^{4}(X_{2}^{k})}\left|\De_{X_{2}}u_{k}(X_{1}^{i},X_{2})\right|^{2}\,dX_{2}\\
	&=\Big(\int_{B_{1/2}^{4}(X_{2}^{k})\backslash B_{\de_{k}R}^{4}(X_{2}^{k})}+\int_{B_{\de_{k}R}^{4}(X_{2}^{k})}\Big)\Big|\De_{X_{2}}u_{k}(X_{1}^{k},X_{2})\Big|^{2}\,dX_{2}\\
		& =\int_{A(R,k)}\left|\De_{X_{2}}u_{k}(X_{1}^{k},X_{2})\right|^{2}dX_{2}+\int_{B_{R}^{4}}|\De_{Y_{2}}v_{k}(0,Y_{2})|^{2}\,dY_2,
	\end{aligned}
	\]
	where $v_{k}(y)=u_{i}((X_1^{k}, X_2^k)+\de_{k}Y)$ and
	\[
	A(R,k)=B_{1/2}^{4}(X_{2}^{k})\backslash B_{\de_{k}R}^{4}(X_{2}^{k}).
	\]
	By a standard reduction procedure (see \cite{Lin-Riviere-2002}), we may assume there is only one bubble at $(0,0)$.
	In this case, since $v_{k}$ converges to a bubble $v\in H_{\loc}^{2}(\R^{4})\cap C_{\loc}^{2}(\R^{4})$,
	the energy identity would follow if we can prove that
	\begin{equation}\label{eq: X-4}
		\lim_{R\to\wq}\lim_{k\to\wq}\int_{A(R,k)}|\De_{X_{2}}u_{k}(X_{1}^{k},X_{2})|^{2}\,dX_{2}=0.
	\end{equation}
	
	 It turns out that we can show \eqref{eq: X-4} by applying the duality between the Lorentz spaces
	$L^{(2,1)}$ and $L^{(2,\wq)}$. For this,  we need to estimate
	the $L^{(2,\wq)}$-norm of\textbf{ $\De_{X_{2}}u_{k}(X_{1}^{k},X_{2})$}
	on $A(R,k)$.
	
	
	\begin{lemma}\label{lem: small endpoint} For any $R>1$ and $0<\la\ll1/2$
		such that $\de_{k}R/\la<1/2$, there holds,
		\[
		\lim_{k\to\wq}\int_{B_{1/2}^{4}(X_{2}^{k})\backslash B_{\la}^{4}(X_{2}^{k})}|\De_{X_{2}}u_{k}(X_{1}^{k},X_{2})|^{2}\,dX_{2}=0,
		\]
		and
		\[
		\lim_{R\to\wq}\lim_{k\to\wq}\int_{B_{\de_{k}R/\la}^{4}(X_{2}^{k})\backslash B_{\de_{k}R}^{4}(X_{2}^{k})}|\De_{X_{2}}u_{k}(X_{1}^{k},X_{2})|^{2}\,dX_{2}=0.
		\]
	\end{lemma}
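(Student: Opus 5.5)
The plan is to treat the two limits separately. Both rest on the smooth convergence facts from the construction of the first bubble and on the slice Lorentz bound \eqref{eq: X-3}.

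\emph{First limit (outer region).} Fix $\la$. Since $X_2^k\to0$, for large $k$ the annulus $B_{1/2}^4(X_2^k)\backslash B_\la^4(X_2^k)$ lies in $B_{2/3}^4\backslash B_{\la/2}^4$. We then use that $u_k\to c$ in $C^2_{\loc}(B_2^n\backslash P)$ with $P=B_2^{n-4}\times\{0\}$. The set $\overline{B_{2/3}^{n-4}}\times(\overline{B^4_{2/3}}\backslash B^4_{\la/2})$ is a compact subset of $B_2^n\backslash P$, so $\De_{X_2}u_k\to0$ uniformly there. This holds uniformly in the first variable, hence in particular at $X_1=X_1^k$. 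Integrating over a set of bounded measure gives the first limit. This is the same argument used at the end of the proof of Lemma \ref{lem: main lemma}.

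\emph{Second limit (inner region).} Rescale by $v_k(y)=u_k((X_1^k,X_2^k)+\de_k y)$. The integral becomes
\[
\int_{B^4_{R/\la}\backslash B^4_R}|\De_{Y_2}v_k(0,Y_2)|^2\,dY_2 ,
\]
since the $L^2$ norm of second derivatives is scale invariant in dimension four. By (iii), $v_k\to v$ in $C^2_{\loc}(\R^n)$, where $v=v(Y_2)$ is a smooth biharmonic map on $\R^4$. For fixed $R$ and $\la$ the annulus is a compact set, so as $k\to\wq$ the integral converges to $\int_{B^4_{R/\la}\backslash B^4_R}|\De v|^2$. This is bounded by $\int_{\R^4\backslash B^4_R}|\De v|^2$. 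Because $v$ has finite energy, that tail tends to $0$ as $R\to\wq$. Here we need $\int_{\R^4}|\De v|^2<\wq$. This follows from Fatou's lemma and the uniform bound $\int_{B^4_{1/2}}|\De_{X_2}u_k(X_1^k,\cdot)|^2\le C$, which is a consequence of \eqref{eq: X-3} (or of Lemma \ref{lem: main lemma}). Also, $\De v=\De_{Y_2}v$ because $v$ is independent of $Y_1$.

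\emph{Main obstacle.} The only delicate point is making sure the $C^2_{\loc}$ convergence of $v_k$ can be evaluated on the slice $Y_1=0$, i.e.\ at $X_1=X_1^k$. Local uniform convergence on $\R^n$ gives this immediately. We must also check that the right-hand quantity is $\De_{X_2}$ and not the full Laplacian. For the limit map these agree, because $\pa_{Y_1}v=0$. For the outer region, $C^2$ convergence controls all second derivatives, so again there is no issue.
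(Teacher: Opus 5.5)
Your proposal is correct and follows the paper's proof essentially step for step. For the outer annulus you use uniform $C^2$ convergence of $u_k$ to the constant on compact sets away from $P$, together with $X_2^k\to 0$. For the inner annulus you use the scale-invariant rescaling to $v_k$ and its $C^2_{\loc}$ convergence to the finite-energy bubble $v$.

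Your one addition is deriving $\int_{\R^4}|\De v|^2<\infty$ from the slice $L^{(2,1)}$ bound on $\nabla^2 u_k(X_1^k,\cdot)$. The paper instead takes this from item (iii). Your derivation is correct and harmless.
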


	\begin{proof} The first term holds, since $|\De u_{k}|^{2}\to0$ uniformly on $B_{1}^{n}(0)\backslash B_{\la/2}^{4}(0)$
		and since $X_{2}^{i}\to0$. On the other hand, recall that the function
		$v_{k}(Y_{1},Y_{2})=u_{k}(X_{1}^{k}+\de_{k}Y_{1},X_{2}^{k}+\de_{k}Y_{2})$
		converges to $v\in W^{2,2}(\R^{4},\mathbb S^{m-1})$ in $C_{\loc}^{2}(\R^n)$.
		Thus
		\[
		\begin{aligned} & \int_{B_{\de_{k}R/\la}^{4}(X_{2}^{k})\backslash B_{\de_{k}R}^{4}(X_{2}^{k})}|\De_{X_{2}}u_{k}(X_{1}^{k},X_{2})|^{2}\,dX_{2}\\
			& \qquad=\int_{\{R\le|Y_{2}|\le R/\la\}}|\De_{Y_{2}}v_{k}(0,Y_{2})|^{2}\,dY_{2}\\
			& \qquad\to\int_{\{R\le|Y_{2}|\le R/\la\}}|\De v(Y_{2})|^{2}\,dY_{2},
		\end{aligned}
		\]
		as $k\to\wq$. Since $\nabla^2 v\in L^2(\R^{4})$, the second assertion
		holds.
	\end{proof}
	Now we prove the following key estimate.
	
	\begin{proposition} \label{prop: key-estimate-1} For any $\ep>0$,
		there exists $R_{0}$ sufficiently large such that
		\begin{equation}\label{no-neck-bubble}
		\limsup_{k\to\wq}\sup_{\rho\in(\de_{k}R,1/4)}I(\rho,u_{k},X^{k})\le\ep
		\end{equation}
		for $R\ge R_{0}$, where $X^k=(X_1^k,X_2^k)$ and
		\[
		I(\rho,u_{k},X^{k})=\rho^{4-n}\int_{B_{\rho}^{n-4}(X_{1}^{k})}\int_{B_{2\rho}^{4}(X_{2}^{k})\backslash B_{\rho}^{4}(X_{2}^{k})}\left(|\De u_{k}|^{2}+\rho^{-2}|\na u_{k}|^{2}\right)\,dx.
		\]
	\end{proposition}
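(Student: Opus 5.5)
We argue by contradiction, in the spirit of the no-neck-energy step in Lin--Rivi\`ere. Suppose there are $\ep_1>0$, $R_j\to\wq$, a subsequence $k_j\to\wq$, and radii $\rho_j\in(\de_{k_j}R_j,1/4)$ such that $I(\rho_j,u_{k_j},X^{k_j})\ge\ep_1$. Lemma \ref{lem: small endpoint}, together with the $C^2_{\loc}$ convergence of $u_k$ to a constant away from $P$, rules out $\rho_j\not\to0$. The $C^2_{\loc}$ convergence $v_k\to v$, combined with $\na^2v,\na v\in L^2(\R^4)$ (interpreted in the rescaled $(n-4)$-dimensional box, where $v$ is independent of $Y_1$), rules out $\rho_j/\de_{k_j}$ staying bounded. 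Hence $\rho_j\to0$ and $\rho_j/\de_{k_j}\to\wq$. We rescale at scale $\rho_j$:
\[
w_j(y)=u_{k_j}\big(X_1^{k_j}+\rho_jY_1,\,X_2^{k_j}+\rho_jY_2\big).
\]
These are stationary biharmonic maps with uniformly bounded normalized energy by \eqref{eq: uniform morrey norm}, and
\[
\int_{B_1^{n-4}\times(B_2^4\setminus B_1^4)}\big(|\De w_j|^2+|\na w_j|^2\big)\ge\ep_1 .
\]

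Next we exploit the structure of $w_j$. By \eqref{eq: X-2}, which is scale invariant and holds uniformly for $r<1/2$, we get
\[
\int_{B_{1}^{n-4}\times B_{1/(2\rho_j)}^4}\Big(\sum_{l\le n-4}|\na\pa_{y_l}w_j|^2+|\na w_j|^2\Big)\to0 .
\]
So every $X_1$-derivative of $\na w_j$, as well as $\na w_j$ itself, vanishes in $L^2$ on the slab. Moreover, the maximality in \eqref{eq: X-1-1} at scale $\de_k$, together with the choice of $c(n)$, gives the small-energy bound $\ep_0/c(n)$ on every $\de_k$-box centred on the slice $\{X_1^k\}\times B_1^4$. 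Passing to scale $\rho_j\gg\de_{k_j}$, I would cover $B_1^{n-4}\times(B_{3}^4\setminus B_{1/2}^4)$ by small boxes and apply the monotonicity formula for $\Phi$. Since $\na w_j\to0$ and the $X_1$-derivatives are negligible, the boundary terms in $\Phi$ are controlled, and this should give $r^{4-n}\int_{B_r}|\De w_j|^2<\ep_0$ on all balls of radius $r\sim\de_{k_j}/\rho_j$ up to scale $1$. The mechanism is that the energy on $n$-balls is comparable to the energy of the 4-dimensional slice, thanks to \eqref{eq: X-0} and the reduction to $X_2$.

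The $\ep$-regularity of Chang--Wang--Yang / Scheven then gives $w_j\to w$ in $C^2_{\loc}$ on $B_{1}^{n-4}\times(\R^4\setminus\{0\})$, or at least on the annulus region where the energy was concentrated. By the vanishing of the $X_1$-derivatives, the limit satisfies $w=w(Y_2)$. It is a biharmonic map on $\R^4\setminus\{0\}$ with finite energy and $\int_{B_2^4\setminus B_1^4}(|\De w|^2+|\na w|^2)\gtrsim\ep_1$, hence nonconstant, and the isolated singularity is removable. This yields a second nontrivial bubble at a scale $\rho_j$ with $\de_{k_j}\ll\rho_j\ll1$, contradicting the standing reduction that there is only one bubble at $(0,0)$. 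Alternatively, the same contradiction is reached through the energy accounting via Lemma \ref{lem: main lemma}.

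The main obstacle is the middle step: upgrading the smallness, which \eqref{eq: X-1-1} only provides at scale $\de_k$ and on the slice $X_1=X_1^k$, to $\ep$-regularity smallness on full $n$-dimensional balls at every intermediate scale in the neck. This is exactly where Lin--Rivi\`ere used conformal changes of coordinates, which are unavailable here. My first attempt is to use the monotonicity formula directly. I would bound the boundary terms $r^{3-n}\int_{\pa B_r}(\pa_X|Du|^2+\dots)$ by averaging in $r$ and using $\na w_j\to0$ in $L^2$ and $\na^2w_j\in$ Morrey. Then I would use \eqref{eq: X-2}, which makes the energy essentially $X_1$-translation invariant, to pass from slice smallness to ball smallness.
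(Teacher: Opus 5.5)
Your first and last steps match the paper: ruling out $\rho_j\not\to0$ and bounded $\rho_j/\de_{k_j}$, rescaling to $w_j$, the normalization, and $X_1$-independence of a limit. The middle step, however, has a genuine gap, and the mechanism you propose cannot close it.

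The maximality in \eqref{eq: X-1-1} only bounds the energy of boxes of size $\de_k$. In the $w_j$-coordinates these have size $\de_{k_j}/\rho_j\to0$. The monotonicity formula transports bounds only downward in scale: $\Phi_u(a,s)\le\Phi_u(a,r)$ for $s\le r$. So smallness at radius $\de_{k_j}/\rho_j$ gives nothing at radius of order $1$. To run it upward you would need the pinching integral $\int\big(|D\pa_X u|^2|x-a|^{2-n}+(n-2)|\pa_X u|^2|x-a|^{-n}\big)$ to be small. For $a$ in the neck, $\pa_X$ contains the $X_2$-radial derivative, which is exactly the neck energy you are trying to control, while \eqref{eq: X-2} only controls $X_1$-derivatives.

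You also cannot extract $\na w_j\to0$ in $L^2$ from \eqref{eq: X-2}. At $r=\de_k$ this would force $\na v\equiv0$, killing the first bubble. At $r=\rho_j$ it would make any $C^2$ limit constant on the annulus, contradicting the nontriviality you then claim. Only the part $\sum_{j\le n-4}|\na\pa_j u_k|^2$ is usable, and that is all the paper uses.

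More fundamentally, concentration of $|\De w_j|^2$ at intermediate scales is exactly what a further bubble in the neck looks like. No argument based on \eqref{eq: X-1-1} and monotonicity alone can exclude it; only the one-bubble reduction can.

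The missing idea is that you do not need $\ep$-regularity smallness in the neck at all. The paper uses \eqref{eq: X-7} to pass to a weak limit, $w_k\wto w_\wq$ in $W^{2,2}_{\loc}(\R^n)$ with $|\De w_k|^2dy\wto|\De w_\wq|^2dy+\nu_2$, and then argues by dichotomy.
\begin{itemize}
\item If there is no concentration, the convergence is strong there. (The paper takes $\nu_2=0$; it suffices that $\nu_2$ does not charge $\overline{B_1^{n-4}}\times(\overline{B_2^4}\setminus B_1^4)$.) The $X_1$-derivatives vanish in the limit by \eqref{eq: X-2}. Then \eqref{eq: normalization} makes $w_\wq=w_\wq(X_2)$ a nontrivial finite-energy biharmonic map $\R^4\to\S^{m-1}$, i.e.\ a second bubble at scale $\rho_k\gg\de_k$.
\item If there is concentration, one repeats Scheven's blow-up procedure at a concentration point to produce another bubble.
\end{itemize}
Either way the one-bubble reduction is contradicted, with no need for the upward propagation of small energy that your argument relies on.
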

	
	 Note that it follows from \eqref{no-neck-bubble}  that
	\[
	\lim_{R\to\wq}\limsup_{k\to\wq}\sup_{\rho\in(\de_{k}R,1/4)}I(\rho,u_{k},X^{k})=0.
	\]
	The limit with respect to $R$ exists due to the fact that the supremum
	is monotonically decreasing with respect to $R$.
	\begin{proof}
		Suppose, on the contrary, that there exists $\ep_{1}>0$, such that
		for any sufficiently large $R$, there exist $\rho_{k}\in(\de_{k}R,1/4)$
		 so that
		\begin{equation}
			I(\rho_{k},u_{k},X^{k})=\sup_{\rho\in(\de_{k}R,1/4)}I(\rho,u_{k},X^{k})\ge\ep_{1}.\label{eq: X-6}
		\end{equation}
		
		\noindent\emph{Claim.} There hold both $\rho_{k}\to0$ and $\de_{k}R/\rho_{k}\to0$
		as $k\to\wq$. Indeed, if $\rho_{k}\ge c>0$ as $k\to\wq$, then $$B_{2\rho_{k}}^{4}(X_{2}^{k})\backslash B_{\rho_{k}}^{4}(X_{2}^{k})\subset B^{4}(0)\backslash B_{c/2}^{4}(0),$$
		which  implies $(|\De u_{k}|^{2}+\rho_{k}^{-2}|\na u_{k}|^{2})$
		converges uniformly to $0$  on $B_{2\rho_{k}}^{4}(X_{2}^{k})\backslash B_{\rho_{k}}^{4}(X_{2}^{k})$.
		This implies that
		$$I(\rho_{k},u_{k},X^{k})=o_{k}(1)\rho_{k}^{4}=o_{k}(1)\to0$$
		as $k\to\wq$, which contradicts  to  (\ref{eq: X-6}).
		On the other hand, since $\rho_{k}\ge\de_{k}R$, if $\rho_{k}/(\de_{k}R)\to c\in[1,\wq)$,
		then  by considering $v_{k}(Y)=u_{k}(X^{k}+\de_{k}Y)$ we deduce
		
		\[
		\begin{aligned}\ep_{1} & \le\left(\frac{\rho_{k}}{\de_{k}}\right)^{4-n}\int_{B_{\rho_{k}/\de_{k}}^{n-4}}\int_{B_{2\rho_{k}/\de_{k}}^{4}\backslash B_{\rho_{k}/\de_{k}}^{4}}
		\Big(|\De v_{k}|^{2}+\left(\frac{\rho_{k}}{\de_{k}}\right)^{-2}|\na v_{k}|^{2}\Big)\,dY\\
			& \to(cR)^{4-n}\int_{B_{cR}^{n-4}}\int_{B_{2cR}^{4}\backslash B_{cR}^{4}}\Big(|\De v|^{2}+\left(cR\right)^{-2}|\na v|^{2}\Big),\,dY\\
			& =\om_{n-4}\int_{B_{2cR}^{4}\backslash B_{cR}^{4}}\Big(|\De v|^{2}+\left(cR\right)^{-2}|\na v|^{2}\Big), dY_{2},
		\end{aligned}
		\]
		which converges to zero as $R\to\wq$, since $\nabla v\in {H}^{1}(\R^{4})$.  This contradicts to (\ref{eq: X-6}) again.
		The claim is therefore proved.
		
		Now, define $w_{k}(Y)=u_{k}(X^{k}+\rho_{k}Y)$ for $|Y|\le1/2\rho_{k}\to\wq$.
		Then the estimate (\ref{eq: uniform morrey norm}) implies that,
		for any $r>0$,
		\begin{equation}
			\begin{aligned} & r^{4-n}\int_{B_{r}^{n}}\Big(|\De w_{k}|^{2}+r^{-2}|\na w_{k}|^{2}\Big)\,dY\\
				&=(\rho_{k}r)^{4-n}\int_{B_{\rho_{k}r}^{n}(X^{k})}\Big(|\De u_{k}|^{2}+(\rho_{k}r)^{-2}|\na u_{k}|^{2}\Big)\,dY\le C
			\end{aligned}
			\label{eq: X-7}
		\end{equation}
		for all $k\gg1$, i.e., $w_{k}$ is locally uniformly bounded in $W^{2,2}$;
		and (\ref{eq: X-6}) is equivalent to
		\begin{equation}
			\int_{B_1^{n-4}}\int_{B_{2}^{4}\backslash B_1^{4}}\left(|\De w_{k}|^{2}+|\na w_{k}|^{2}\right)dy\ge\ep_{1}\label{eq: normalization}
		\end{equation}
		for all $k\gg1$. Hence we may assume again, up to a subsequence that,
		$$w_{k}\wto w_{\wq} \ {\rm{in}}\ W_{\loc}^{2,2}(\R^{n})\ \ {\rm{ and \ }} \ \ |\De w_{k}|^{2}dy\wto|\De w_{\wq}|^{2}dy+\nu_{2}.$$
		 as weak convergence of Radon measures for some nonnegative defect measure
		$\nu_{2}$.
		
		If $\nu_{2}=0$, then we have $w_{i}\to w_{\wq}$ strongly in $W_{\loc}^{2,2}(\R^{n})$.
		By  (\ref{eq: X-2}), for any $r>0$ we have
		\[
		r^{4-n}\int_{B_{r}^{n-4}\times B_{1/\rho_{k}}^{4}}\sum_{j=1}^{n-4}|\na\pa_{j}w_{k}|^{2}=(\rho_{k}r)^{4-n}\int_{B_{\rho_{k}r}^{n-4}\times B_{1}^{4}}\sum_{j=1}^{n-4}|\na\pa_{j}u_{k}|^{2}\to0
		\]
		as $k\to\wq$. By  (\ref{eq: normalization}) we see that
		$w_{\wq}$ is a nontrivial map. Hence we conclude from these two facts and
		the estimate (\ref{eq: X-7}) that $w_{\wq}(X)=w_{\wq}(X_{2}):\R^{4}\to \mathbb{S}^{m-1}$
		is a nontrivial, smooth biharmonic mapping from $\R^{4}$ into $\S^{k}$
		with a bounded energy.
		
		If $\nu_{2}$ is nontrivial, then we
		may repeat the same blow up procedure to find another bubble.
		
		Both cases contradict to  the assumption that there is only one bubble.
		The proof is complete.
	\end{proof}
	Now we are ready to prove Theorem \ref{thm: main results}.
	\begin{proof}[Proof of Theorem \ref{thm: main results}]
		By Proposition \ref{prop: key-estimate-1} and the standard small energy regularity
		 theorem (see e.g. \cite{Scheven-2008-ACV,Wang-2004-CPAM}), for $\ep>0$ sufficiently small, we infer that
		\[
		|X_{2}-X_{2}^{k}|^{2}|\na^{2}u_{k}(X_{1}^{k},X_{2})|\le C\sqrt{\ep}
		\]
		on  $\{X_{1}^{i}\}\times \big(B_{1/4}^{4}\backslash B_{\delta_kR}^{4}\big)$. This
		implies that the set
		$$A_{\la}:=\{x\in A(R,k):|\na^{2}u_{k}(X_{1}^{k},X_{2})|>\la\}$$
		satisfies
		\[
		|X_{2}-X_{2}^{k}|\le\frac{C\sqrt{\ep}}{\sqrt{\la}}\qquad\forall\,X_{2}\in A_{\la}.
		\]
		That is, $A_{\la}\subset B^{4}(X_{2}^{k},C\ep/\sqrt{\la})$. Thus
		\[
		\|\na^{2}u_{k}\|_{L^{(2,\wq)}(\{X_{1}^{k}\}\times A(R,k))}=\sup_{\la>0}\la|A_{\la}|^{1/2}\le C\ep.
		\]
		This, together with the duality between the Lorentz spaces $L^{(2,1)}$ and $L^{(2,\infty)}$, implies that
		\[
		\int_{A(R,k)}|\De_{X_{2}}u_{k}(X_{1}^{k},X_{2})|^{2}\,dX_{2}\le\|\na^{2}u_{k}\|_{L^{(2,1)}(\{X_{1}^{k}\}\times A(R,k))}^2\|\na^{2}u_{k}\|_{L^{(2,\wq)}(\{X_{1}^{k}\}\times A(R,k))}^2\le C\ep^2.
		\]
		Since $\ep>0$ is arbitrary, \eqref{eq: X-4} follows and hence the proof is complete.
	\end{proof}
	
	\subsection{A concluding remark on Lie group target}
	Our argument in the above can be applied to even more general settings. For instance, suppose that the target manifold is a submanifold of the special orthogonal group $G=SO(m)$. It is known that its Lie algebra $\mathfrak{g}=so(m)$.
	For $\Om\subset\R^n$, let $U\in H^2(\Omega, G)$ be a biharmonic map.
	
	Let $\varphi\in C_0^\infty(\Omega,\mathfrak{g})$ be a test function. Then $\varphi^T+\varphi=0$, and
	\[U_t=U\exp(t\varphi)\colon \Omega\to G \]
	is a family of variations of $U$. Thus
	\[
	\begin{aligned}
	0&=\frac{d}{dt}\Big|_{t=0}\int_{\Omega}|\Delta U_t|^2dx=2\int_{\Omega}\left\langle \Delta U, \Delta\Big(\frac{d}{dt}|_{t=0}U_t\Big)\right\rangle dx\\
	&=2\int_{\Omega}\left\langle\Delta U, \Delta (U\varphi)\right\rangle dx
	=2\int_{\Omega}\left\langle\Delta U, \left(\Delta U\varphi+2\nabla U\cdot \nabla \varphi+U\Delta \varphi \right)\right\rangle dx\\
	&=2\int_{\Omega}\left\langle\Delta U, \big(2\nabla U\cdot \nabla\varphi +U\Delta \varphi \big)\right\rangle dx, 
	\end{aligned}
	\]
	where in the last equality we used the fact that $\var$ is anti-symmetric.
	Above, $\left\langle A, B\right\rangle=\sum_{i,j=1}^n A^i_j B^i_j$ denotes the inner product of $n\times n$ matrices. By integrating by parts  we find that $U$ satisfies the equation
	\begin{equation}\label{eq:Lie subgroup case1}
		\Delta\left(U^T\Delta U \right)=2\divv(\Delta U \nabla U)\quad\ {\rm{in}}\ \ \ \Omega.
	\end{equation}
	Since
	\[
	U^T\Delta U=\divv(U^T\nabla U)-\nabla U^T\cdot\nabla U,
	\]
	we obtain that
	\begin{equation}\label{eq:Lie subgroup case2}
		\Delta\divv(U^T\nabla U )=2\divv(\Delta U \nabla U)+\divv(\nabla U^T\cdot\nabla U) \quad\ {\rm{in}}\ \ \ \Omega.
	\end{equation}
	Then it follows from \eqref{eq:Lie subgroup case1} and the standard elliptic regularity theory in Lorentz spaces that $U\in W^{3,(\frac{4}{3},1)}_{\rm{loc}}(\Om)$,
	and
	\[
	\big\|U\big\|_{W^{3,(\frac{4}{3},1)}(\Om')}\leq C(n,m, {\rm{dist}}(\Om',\partial\Om))\big\|U\big\|_{H^{2}(\Omega)}, \ \forall\Om'\Subset\Om.
	\]
	Thus
	$$\big\|\nabla^2U\big\|_{L^{(2,1)}(\Om')}\leq C(n,m, {\rm{dist}}(\Om',\partial\Om))\big\|U\big\|_{H^{2}(\Omega)},  \ \forall\Om'\Subset\Om.$$
	Then, by the same argument as in the proof of Theorem \ref{thm: main results}, the conclusions of Theorem \ref{thm: main results}
	remain to hold for a sequence of weakly convergent biharmonic mappings $\{U_k\}\subset H^2(\Omega, G)$.
	


\begin{thebibliography}{10}
		\bibitem{DeLellis-book} \textsc{C. De Lellis}, \emph{Rectifiable sets, densities and tangent measures.} Zur. Lect. Adv. Math. European Mathematical Society (EMS), Z\"urich, 2008.
		
		\bibitem{Chang-W-Y-1999} \textsc{S. Y. A. Chang, L. Wang and P. C. Yang,}
		\emph{A regularity theory of biharmonic maps.} Commun. Pure Appl.
		Math. \textbf{52}(9) (1999), 1113-1137.
		
		\bibitem{Chen-Zhu-2023-SCM}
		\textsc{Y. Chen and M. Zhu}, \emph{Bubbling analysis for extrinsic biharmonic maps from general Riemannian 4-manifolds}. Sci. China Math. \textbf{66} (2023), no. 3, 581-600.
		
		
		\bibitem{Evans-1991-ARMA} \textsc{C. L. Evans,} \emph{Partial regularity for stationary harmonic maps into spheres.} Arch. Rat. Mech. Anal. \textbf{116} (1991), 101-163.
		
		\bibitem{DingTian1995} \textsc{W. Y. Ding and G. Tian}, \emph{Energy identity for a class of approximate harmonic maps from surfaces}. Comm. Anal. Geom. \textbf{3} (1995), 543-554.
		
		\bibitem{Guo-J-Xiang-Zheng-2025}
		\textsc{C.-Y. Guo, G.-C. Jiang, C.-L. Xiang and G.-F. Zheng,} \emph{Optimal higher regularity for biharmonic maps via quantitative stratification}. Published online at Peking Math. J., 2025.
		
		\bibitem{Guo-Qi-Sun-Wang-2026-AMS}
		\textsc{C.-Y. Guo, W.-J. Qi, Z.-M. Sun and C.Y. Wang}, \emph{The Lamm-Rivi\`ere system II: energy identity}. To appear in Acta Math. Sci. Ser. B (Engl. Ed.), 2026.
		
		\bibitem{Guo-Wang-Xiang-2023-CVPDE} \textsc{C.-Y. Guo, C.Y. Wang and C.-L. Xiang}, \emph{ $L^p$-regularity  for fourth order elliptic systems with antisymmetric potentials in higher dimensions.}  Calc. Var. Partial Differential Equations \textbf{62} (2023), no. 1, Paper No. 31.
		
		\bibitem{Guo-Xiang-2019-Boundary}
		\textsc{C.-Y. Guo and C.-L. Xiang}, \emph{Regularity of solutions for a fourth order linear system via conservation law}.  J. Lond. Math. Soc. (2) \textbf{101} (2020), no. 3, 907-922.
		
		\bibitem{Guo-Xiang-Zheng-2021-CVPDE}
		\textsc{C.-Y. Guo, C.-L. Xiang and G.-F. Zheng,}\emph{The Lamm-Rivi\`ere system I:  $L^p$  regularity theory.} Calc.
		Var. Partial Differ. Equ. \textbf{60} (2021), no. 6, Paper No. 213.
		
		
		\bibitem{Hornung-Moser-2012-APDE}
		\textsc{P. Hornung and R. Moser}, \emph{Energy identity for intrinsically biharmonic maps in four dimensions}. Anal. PDE \textbf{5} (2012), no. 1, 61-80.
		
		\bibitem{Lamm-Riviere-2008}
		\textsc{T. Lamm and T. Rivi\`ere,} \emph{Conservation
			laws for fourth order systems in four dimensions.} Comm. Partial Differential
		Equations \textbf{33} (2008), 245-262.
		
		\bibitem{l-r-4}
		\textsc{P. Laurain and T. Rivi\`ere,} \emph{Energy quantization for biharmonic maps.} Adv. Calc. Var. \textbf{6} (2013), no. 2, 191--216.
		
		\bibitem{l-r-2}
		\textsc{P. Laurain and T. Rivi\`ere,} \emph{Angular energy quantization for linear elliptic systems with antisymmetric potentials and applications.} Anal. PDE \textbf{7} (2014), 1-41.
		
		\bibitem{Lin-1999-Annals} \textsc{F. H. Lin}, \emph{Gradient estimates
			and blow-up analysis for stationary harmonic maps.} Ann. of Math.
		(2) \textbf{149} (1999), no. 3, 785-829.
		
		\bibitem{Lin-Riviere-2002}\textsc{F. H. Lin and T. Rivi\`ere}, \emph{Energy
			quantization for harmonic maps}. Duke Math. J. \textbf{111} (2002),
		no. 1, 177-193.
		
		\bibitem{LinWang1998}\textsc{F. H. Lin, C. Y. Wang}, \emph{Energy identity of harmonic map flows from surfaces at finite singular time}.
		Calc. Var. Partial Differential Equations \textbf{6} (1998), no. 4, 369-380.
		
		\bibitem{Liu-Yin-2016-MZ}
		\textsc{L. Liu and H. Yin}, \emph{Neck analysis for biharmonic maps}. Math. Z. \textbf{283} (2016), no. 3-4, 807-834.
		
\bibitem{Mattila-book} \textsc{P. Mattila}, \emph{Geometry of sets
and measures in Euclidean spaces.} \emph{Fractals and rectifiability.} 		Cambridge Stud. Adv. Math., 44 Cambridge University Press, Cambridge, 1995.

		
		\bibitem{Naber-Val-2017-AnnMath} \textsc{A. Naber and D. Valtorta,}
		\emph{Rectifiable-Reifenberg and the regularity of stationary and
			minimizing harmonic maps.} Ann. of Math. (2) \textbf{185 }(2017),
		131-227.
		
		
		\bibitem{Naber-V-24-arXiv}\textsc{A. Naber and D. Valtorta}, \emph{Energy Identity for Stationary Harmonic Maps}. Preprint at arXiv:2401.02242[math.AP].
		
		\bibitem{Parker1996}  \textsc{T. Parker}, \emph{Bubble tree convergence for harmonic maps}
		J. Differential Geom. \textbf{44} (1996), no. 3, 595-633.
		
		\bibitem{QingTian1997} \textsc{J. Qing, G. Tian}, \emph{Bubbling of the heat flows for harmonic maps from surfaces}.
		Comm. Pure Appl. Math. \textbf{50} (1997), no. 4, 295-310.
		
		
		\bibitem{Riviere-2007} \textsc{T. Rivi\`ere,} \emph{Conservation
			laws for conformally invariant variational problems.} Invent. Math.
		\textbf{168} (2007), 1-22.
		
		\bibitem{Sacks-Uhlenbeck-1981}
		\textsc{J. Sacks and K. Uhlenbeck}, \emph{The existence of minimal immersions of 2-spheres}. Ann. of Math. (2) \textbf{113} (1981), no. 1, 1-24.
		
		\bibitem{Scheven-2008-ACV}\textsc{C. Scheven}, \emph{Dimension reduction for the singular set of biharmonic maps}. Adv. Calc. Var. \textbf{1} (2008), no. 1, 53-91.
		
		\bibitem{Simon-book} \textsc{L. Simon,} \emph{Theorems on the regularity
			and singularity of minimal surfaces and harmonic maps.} Lectures on
		geometric variational problems (Sendai, 1993), 115-150, Springer,
		Tokyo, 1996.
		
		
		\bibitem{Struwe-2008} \textsc{M. Struwe,} \emph{Partial regularity
			for biharmonic maps, revisited.} Calc. Var. Partial Differential Equations
		\textbf{33} (2008), 249-262.
		
		\bibitem{Strzelecki-2003}
		\textsc{P. Strzelecki}, \emph{On biharmonic maps and their generalizations}. Calc. Var. Partial Differential Equations \textbf{18} (2003), no. 4, 401-432.
		
		\bibitem{Wang-2004-CV} \textsc{C. Y. Wang,} \emph{Remarks on biharmonic
			maps into spheres.} Calc. Var. Partial Differential Equations \textbf{21}
		(2004), 221-242.
		
		\bibitem{Wang-2004-MZ} \textsc{C. Y. Wang,} \emph{Biharmonic maps
			from $\mathbb{R}^4$ into a Riemannian manifold.} Math. Z. \textbf{247} (2004),
		65-87.
		
		\bibitem{Wang-2004-CPAM} \textsc{C. Y. Wang,} \emph{Stationary biharmonic
			maps from $R^m$ into a Riemannian manifold.} Comm. Pure Appl. Math. \textbf{57}
		(2004), 419-444.
		
		\bibitem{Wang-Zheng-2012}
		\textsc{C. Y. Wang and S. Z. Zheng,} \emph{Energy identity of approximate biharmonic maps to Riemannian manifolds and its application.} J. Funct. Anal. \textbf{263} (2012), no. 4, 960-987.
		
		\bibitem{Wang-Zheng-2013}
		\textsc{C. Y. Wang and S. Z. Zheng,} \emph{Energy identity for a class of approximate biharmonic maps into sphere in dimension four}. Discrete Contin. Dyn. Syst. \textbf{33} (2013), no. 2, 861-878.
		
		\bibitem{zhang-zhu-2026}
		\textsc{H. Zhang and M. Zhu}, \emph{Energy quantization for stationary harmonic maps into homogeneous spaces}. Sci. China Math. \textbf{69} (2026), no. 1, 167-182.
	\end{thebibliography}
\end{document}